\numberwithin{equation}{section}
\def\qed{\hfill$\Box$\par}
\def\qed{\ \ \ifhmode\unskip\nobreak\fi\ifmmode\ifinner
         \else\hskip5pt\fi\fi
 \hbox{\hskip5pt\vrule width4pt height6pt depth1.5pt\hskip 1 pt}}
\def\cl{\centerline}
\newtheorem{theo}{Theorem}[section]
\newtheorem{lemm}[theo]{Lemma}
\newtheorem{rema}[theo]{Remark}
\newtheorem{defi}[theo]{Definition}
\newtheorem{coro}[theo]{Corollary}
\newtheorem{prop}[theo]{Proposition}
\begin{document}
\cl{{\large\bf Whittaker Modules for the Schr\"{o}dinger Algebra}
\noindent\footnote{\footnotesize Supported partially by the National
Natural Science Foundation of China (No. 11271165, 11047030,
11171055) and the Youth Foundation of National Natural Science
Foundation of China (No.11101350).}} \vspace{16pt}
 \cl{Xiufu Zhang}
 \cl{\small School of Mathematics and Statistics}
 \cl{\small Jiangsu
Normal University, Xuzhou 221116, China}
\cl{Email:
xfzhang@jsnu.edu.cn}
 \vspace{16pt}
 \cl{Yongsheng Cheng}
 \cl{\small School of Mathematics and Information
Science }\cl{ \small Institute of Contemporary Mathematics, }
\cl{\small Henan University, Kaifeng 475004, China}

\numberwithin{equation}{section}

\begin{abstract}
In this paper,  the property and the classification the simple
Whittaker modules  for the schr\"{o}dinger algebra are studied. A
quasi-central element plays an important role in the study of
Whittaker modules of level zero. For the Whittaker modules of
nonzero level, our arguments use the Casimir operator of semisimple
Lie algebra $sl_2$ and the description of simple modules over
conformal Galilei algebras by R. L\"{u} , V. Mazorchuk and K. Zhao.
\vspace{2mm}\\{\bf 2000 Mathematics Subject Classification:} 17B10,
17B65, 17B68, 17B81\vspace{2mm}
\\ {\bf Keywords:}  Schr\"{o}dinger algebra; Whittaker module;
 simple module; Casimir element; Galilei algebra
\end{abstract}

\vskip 3mm\noindent{\section{Introduction}}

We denote by $\mathbb{Z}, \mathbb{Z}_+, \mathbb{N}$ and $\mathbb{C}$
the set of all integers, nonnegative integers, positive integers and
complex numbers, respectively.

The Schr\"{o}dinger group is the symmetry group of the free particle
Schr\"{o}dinger equation. The Lie algebra $\mathfrak{S}$ of this
group in the case of $(1+1)-$dimensional space-time is called the
Schr\"{o}dinger algebra which plays an important role in physics
applications (see [1-4]). Concretely, the Schr\"{o}dinger algebra
has a basis $\{e, h, f,p, q, z\},$ subject to the Lie brackets
\begin{eqnarray}
\begin{split}
&[h,e]=2e, &[&h,f]=-2f, &[&e,f]=h,\\
&[h,p]=p, &[&h,q]=-q, &[&p,q]=z,\\
&[e,q]=p, &[&p,f]=-q, &[&f,q]=0,\\
&[e,p]=0, &[&z,\mathfrak{S}]=0. &
\end{split}
\end{eqnarray}
 From the definition, we see that the
Schr\"{o}dinger algebra $\mathfrak{S}$ can be viewed as a semidirect
product $ \mathfrak{S}=\mathfrak{H}\rtimes sl_2 $ of two
subalgebras: a Heisenberg subalgebra
$\mathfrak{H}=\mathrm{span}\{p,q,z\}$ and
$sl_2=\mathrm{span}\{e,h,f\}.$ We also see that
$\mathfrak{S}=\bigoplus_{i=-2}^2\mathfrak{S}_i$ is
$\mathbb{Z}-$graded, where
$$\mathfrak{S}_{-2}=\mathbb{C}f,\mathfrak{S}_{-1}=\mathbb{C}q,
\mathfrak{S}_{0}=\mathbb{C}h+\mathbb{C}z,
\mathfrak{S}_{1}=\mathbb{C}p, \mathfrak{S}_{2}=\mathbb{C}e.$$
Obviously,
\begin{eqnarray}
\mathfrak{S}=\mathfrak{n}^{+}\oplus\mathfrak{h}\oplus\mathfrak{n}^{-}
\end{eqnarray}
is the Cartan decomposition according to the Cartan subalgebra
$\mathfrak{h}=\mathbb{C}h+\mathbb{C}z,$ where
$\mathfrak{n}^{-}=\mathrm{span}\{f,q\},
\mathfrak{n}^{+}=\mathrm{span}\{p,e\}.$

A module $V$ for Schr\"{o}dinger algebra $\mathfrak{S}$ is called a
weight module if it is the sum of all its weight spaces
$V_{\lambda}=\{v\in V|hv=\lambda v\}$ for some $\lambda\in
\mathbb{C}$. Vectors in $V_{\lambda}$ are called weight vectors. A
weight $\mathfrak{S}-$module $V$ is called a Harish-Chandra module
if all weight spaces are finite-dimensional.  A nonzero weight
vector $v\in V$ is called a highest weight vector if
 $\mathfrak{n}^{+}v=0$. $V$ is called a highest
weight module if it is generated by a highest weight vector $v$.

 Recently, the weight modules for
the Schr\"{o}dinger algebra are deeply studied.  In [5], the
irreducible lowest weight modules are classified by using the
technique of singular vectors. In [6], it is proved that all the
weight spaces of a simple weight module  which is neither a highest
weight module nor a lowest weight module have the same dimension. By
using the results in [5] and [6] and $\mathrm{Mathieu}^{,}$s
twisting functors (see [7]),  the author proved that the
Harish-Chandra modules can be twisted from a class of irreducible
highest weight modules in [8]. Thus the classification of simple
weight modules with finite-dimensional weight spaces are completely
obtained: such module is either a dense $sl_2-$module (see [9]) or a
highest (lowest) weight module or a module ``twisted" from a highest
weight module. Moreover, The classification of simple weight modules
over the Schr\"odinger algebra are completed in [10].

The first series of nonweight simple modules for complex semisimple
Lie algebras were constructed by B. Kostant in [11]. These modules
were called Whittaker modules because of their connection to
Whittaker equation in number theory. D. Arnal and G. Pinczon studied
in [12] a generalization of Whittaker modules.
  Since then the study of property
 of  Whittaker modules over various
algebras are open. The prominent role played by Whittaker modules is
illustrated by the main result in [13] about the
 classification of all simple modules for the Lie algebra
$sl_2:$  the simple $sl_2-$modules fall into three families: highest
(lowest) weight modules, Whittaker modules and a third family
obtained by localization.  Whittaker modules have many important
properties and are studied for many algebras, such as Virasoro
algebra, Heisenberg algebras, affine Lie algebras,  generalized Weyl
algebras, quantum enveloping algebra $U(sl_2)$, Schr\"{o}dinger-witt
algebra, generalized Virasoro algebra, twisted Heisenberg-Virasoro
algebra as well as a class of algebras similar to $U(sl_2)$ etc.
(see Refs. [14-23]). Moreover, quantum deformation of Whittaker
modules and modules induced from Whittaker modules are studied(see
Refs.[24],[25]).

In this paper, we classify the simple Whittaker modules over the
Schr\"{o}dinger algebra.

In section 2, we give some notations, identities and some lemmas
which will be used in the following sections. If $V$ is a simple
module over the Schr\"{o}dinger algebra then $z$ acts as a scalar
$\dot{z}$, called the level.

In section 3, we study the Whittaker modules of zero level. For
simplicity, we first  study  the Whittaker modules over a subalgebra
generated by $e,p,h,q,z$  in subsection 3.1. In subsection 3.2,
the classification of simple Whittaker modules of zero level is
given. A quasi-central element plays an important role in this part.

In section 4, we use the Casimir operator to study the simplicity of
Whittaker modules for semisimple Lie algebra $sl_2$ first. Then by
using the simple Whittaker modules for Heisenberg algebra
$\mathfrak{H}$ and $sl_2,$ we give the classification of simple
Whittaker modules of nonzero level for Schr\"{o}dinger algebra.

\noindent{\section{Definitions, identities and some lemmas}}

In this section, we first give some fundamental definitions and some
identities which will be used frequently in this paper.

\begin{defi} Let $\phi:\mathfrak{n}^{+}\rightarrow\mathbb{C}$ be any
nonzero Lie algebra homomorphism. Let $V$ be a
$\mathfrak{S}$-module.

 $\mathrm{(i)}$ A nonzero vector $v\in V$ is
called a Whittaker vector of type $\phi$ if $xv=\phi(x)v$ for all
$x\in\mathfrak{n}^{+}.$

$\mathrm{(ii)}$ $V$ is called a Whittaker module for $\mathfrak{S}$
of type $\phi$ if $V$ contains a cyclic Whittaker vector $v$ of type
$\phi$.
\end{defi}

\begin{defi}  Let
$\phi:\mathfrak{n}^{+}\rightarrow\mathbb{C}$ be any nonzero Lie
algebra homomorphism. Define a one-dimensional
$\mathfrak{n}^{+}$-module $\mathbb{C}_{\phi}=\mathbb{C}w$ by
$ew=\phi(e)w, pw=\phi(p)w.$ The induced module
$$W_{\phi}=U(\mathfrak{S})\otimes_{U(\mathfrak{n}^{+})}\mathbb{C}_{\phi}$$
is called the universal Whittaker module of type $\phi$ in the sense
that for any Whittaker module $V$ with whittaker vector $v$ of type
$\phi,$ there is a unique surjective homomorphism
$f:W_{\phi}\rightarrow V$ such that $uw\mapsto uv, \forall u\in
U(\mathfrak{b}^{-}).$
\end{defi}

For simplicity,  we always denote $\phi(x)$ by $\dot{x}$ for $x=e,p$
in the following.

We give some useful identities in $U(\mathfrak{S})$:
\begin{lemm}In $U(\mathfrak{S}),$ the following identities hold:
\begin{align}
&[p,f^n]=-nqf^{n-1},\\
&[p,h^n]=\sum_{i=1}^{n}(-1)^i\binom{n}{i}
h^{n-i}p,\\
&[q,h^n]=\sum_{i=1}^{n}\binom{n}{i}
h^{n-i}q,\\
&[e,q^n]=nq^{n-1}p+\frac{n(n-1)}{2}q^{n-2}z,\\
&[e,f^n]=nf^{n-1}h-n(n-1)f^{n-1},\\
&[e,h^n]=\sum_{i=1}^{n}(-2)^i\binom{n}{i} h^{n-i}e.
\end{align}
\end{lemm}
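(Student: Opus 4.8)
The plan is to prove each of the six commutator identities in Lemma by induction on $n$, using the defining brackets of $\mathfrak{S}$ from $(1.1)$ together with the standard derivation identity $[a,bc]=[a,b]c+b[a,c]$ (equivalently $[a,b^n]=\sum_{i} b^{i}[a,b]b^{n-1-i}$). Each identity is of the form $[x,y^n]$ for a fixed generator $x$ and another generator $y$, so the natural strategy is: establish the base case $n=1$ directly from $(1.1)$, then assume the formula for $n-1$ and expand $[x,y^n]=[x,y^{n-1}]y+y^{n-1}[x,y]$, substituting the inductive hypothesis and the known single bracket $[x,y]$.

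First I would handle the ``easy'' identities $(2.3)$, $(2.6)$, and $(2.7)$, where the relevant single bracket closes up nicely. For $(2.3)$, since $[p,f]=-q$ and crucially $[q,f]=0$, the term $q$ commutes past all remaining factors of $f$, so the sum telescopes to $-nqf^{n-1}$ with no lower-order corrections. For $(2.7)$, $[e,f]=h$ and one must keep track of how $h$ commutes back through the $f$'s using $[h,f]=-2f$; the appearance of the $-n(n-1)f^{n-1}$ term comes precisely from collecting these $[h,f]$ corrections, and a clean way to organize this is to write $[e,f^n]=\sum_{i=0}^{n-1} f^i h f^{n-1-i}$ and then move each $h$ to the left using $h f^k = f^k h - 2k f^k$. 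For $(2.6)$ the computation for $[e,q^n]$ with $[e,q]=p$ and $[p,q]=z$, $[e,p]=0$ is analogous but now produces a quadratic-in-$n$ coefficient $\tfrac{n(n-1)}{2}$ on the $z$-term, which is exactly the count $\binom{n}{2}$ of pairs contributing a $z$.

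Next I would treat the three identities $(2.4)$, $(2.5)$, $(2.8)$ where the bracketed element is a power of $h$ and the answer is an explicit binomial sum. Here the cleanest route is to use $[x,h^n]=\sum_{i=1}^n \binom{n}{i}(\operatorname{ad} h)$-type expansions following from the fact that $h$ acts on $p,q,e$ by a scalar: $[h,p]=p$ gives $[p,h]=-p$, $[h,q]=-q$ gives $[q,h]=q$, and $[h,e]=2e$ gives $[e,h]=-2e$. Since each of $p$, $q$, $e$ is an eigenvector of $\operatorname{ad} h$ with eigenvalue $-1,\,+1,\,-2$ respectively, one gets $[x,h^n]=\big((h-\mu)^n - h^n\big)x$ for the appropriate eigenvalue $\mu$, and the binomial theorem then produces exactly the stated sums $(2.4)$, $(2.5)$, $(2.8)$ (with signs $(-1)^i$, $1$, and $(-2)^i$ respectively). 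This eigenvector observation makes these three essentially immediate and avoids a separate induction.

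I do not expect any serious obstacle here, as these are routine universal-enveloping-algebra computations; the only points demanding care are bookkeeping. The main subtlety is that in $(2.7)$ and $(2.6)$ one must correctly accumulate the lower-order terms arising from the secondary brackets ($[h,f]=-2f$ in the former, $[p,q]=z$ in the latter), since a naive telescoping that ignores these would drop the $-n(n-1)f^{n-1}$ and $\tfrac{n(n-1)}{2}q^{n-2}z$ corrections. I would therefore double-check these two base cases and inductive steps most carefully, verifying for instance $n=2$ explicitly against $(1.1)$ before invoking the general induction.
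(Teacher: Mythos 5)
Your proposal is correct and follows essentially the same route as the paper: your eigenvector observation for the $[x,h^n]$ identities ($xh^n=(h-\mu)^n x$, then the binomial theorem) is exactly the paper's argument that $[e,h^n]=(L_h-\mathrm{ad}h)^n e - h^n e$ with $L_h$ and $\mathrm{ad}h$ commuting, and your Leibniz-rule bookkeeping supplies the remaining identities, which the paper simply declares obvious or provable similarly. No gaps: the computations you outline, including the lower-order corrections $-n(n-1)f^{n-1}$ and $\frac{n(n-1)}{2}q^{n-2}z$, all check out against the defining brackets.
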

\begin{proof} We only prove (2.6), the other identities are obvious or can be
proved similarly. Let $R_h$ (resp. $L_h$) be the right (resp. left)
multiplication operator by $h.$ Then
\begin{align*}
& [e,h^n]=R_h^n(e)+h^ne=(L_h-\mathrm{ad}h)^ne+h^ne.
\end{align*}
Using the binomial formula since $L_h$ and $\mathrm{ad}h$ are
commuting, we obtain (2.6).
\end{proof}

\begin{lemm} For an arbitrary homomorphism of Lie algebra $\phi:\mathfrak{n}^+\rightarrow\mathbb{C},$
if  $W_{\phi}$ is the universal Whittaker module type $\phi$, then
any Whittaker vectors of $W_{\phi}$ are all of type $\phi.$
\end{lemm}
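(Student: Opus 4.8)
Plan: The backbone of the argument is the PBW description of $W_\phi$. Since $\mathfrak{S}=\mathfrak{n}^-\oplus\mathfrak{h}\oplus\mathfrak{n}^+$ and $W_\phi=U(\mathfrak{S})\otimes_{U(\mathfrak{n}^+)}\mathbb{C}_\phi$, the vectors $\{f^aq^bh^cz^dw:a,b,c,d\in\mathbb{Z}_+\}$ form a basis of $W_\phi$. Because $\mathfrak{n}^+=\mathrm{span}\{e,p\}$ is abelian, a Whittaker vector of type $\psi$ is exactly a nonzero $u$ with $eu=\psi(e)u$ and $pu=\psi(p)u$, so proving the lemma reduces to showing $\psi(e)=\dot{e}$ and $\psi(p)=\dot{p}$, which gives $\psi=\phi$ on $\mathfrak{n}^+$.

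Next I would introduce the grading $\deg(f^aq^bh^cz^dw)=2a+b$ and write $W_\phi=\bigoplus_{n\ge0}W^{(n)}$. Using the identities (2.3)--(2.6) of Lemma 2.3 together with $ph^c=(h-1)^cp$, $eh^c=(h-2)^ce$, and $ew=\dot{e}w$, $pw=\dot{p}w$, I would expand $e\cdot f^aq^bh^cz^dw$ and $p\cdot f^aq^bh^cz^dw$ in the PBW basis. The expected outcome is that both $e$ and $p$ are degree–non‑increasing: $e(W^{(n)})\subseteq W^{(n)}\oplus W^{(n-1)}\oplus W^{(n-2)}$ and $p(W^{(n)})\subseteq W^{(n)}\oplus W^{(n-1)}$, with top‑degree components
$$\pi_n(e\cdot f^aq^bh^cz^dw)=\dot{e}\,f^aq^b(h-2)^cz^dw,\qquad \pi_n(p\cdot f^aq^bh^cz^dw)=\dot{p}\,f^aq^b(h-1)^cz^dw,$$
where $\pi_n$ is the projection onto $W^{(n)}$.

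To conclude, write $u=u_n+(\text{lower degree})$ with $0\neq u_n\in W^{(n)}$ its top component. Applying $\pi_n$ to $eu=\psi(e)u$ yields $\pi_n(e\,u_n)=\psi(e)u_n$. By the formula above, $\pi_n(e\cdot-)$ acts on $W^{(n)}$ as $\dot{e}$ times the substitution $h\mapsto h-2$ (fixing $f,q,z,w$); this shift operator is unipotent on each finite‑dimensional subspace of bounded $h$‑degree, so its only eigenvalue is $1$ and hence the only eigenvalue of $\pi_n(e\cdot-)$ is $\dot{e}$. Since $u_n$ is a nonzero eigenvector, $\psi(e)=\dot{e}$. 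The identical argument applied to $p$, now with the unipotent shift $h\mapsto h-1$, gives $\psi(p)=\dot{p}$, completing the proof.

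The only genuine labor is the commutator bookkeeping that produces the two explicit expansions and verifies that the grading is non‑increasing; I expect this to be the main obstacle, and I would organize it by commuting $e$ (resp. $p$) successively past $f^a$, $q^b$, $h^c$ using the stated identities. Once that is in place the finish is soft, and I would be careful to note that the top‑degree operators are the shifts $(h-2)^c$ and $(h-1)^c$ rather than bare scalars, so the decisive point is the elementary fact that a shift operator on $\mathbb{C}[h]$ has the single eigenvalue $1$.
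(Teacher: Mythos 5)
Your proof is correct, and while it lives in the same family as the paper's argument (a leading-term analysis in the PBW basis of the induced module), it finishes by a genuinely different mechanism. The paper writes a putative Whittaker vector of type $\phi'$ as $\sum a_{ijk}f^iq^jh^kw$, orders the monomials by total degree $i+j+k$ refined by the lexicographic order on $(i,j,k)$, and uses the identities of Lemma 2.3 to show that applying $x\in\{e,p\}$ reproduces $\phi(x)$ times the unique maximal monomial plus strictly smaller terms; comparing this with $xv=\phi'(x)v$ at the top monomial gives $\phi'(x)=\phi(x)$. The lex tie-breaking there is forced because commutator terms such as $[e,f^i]\sim f^{i-1}h$ preserve total degree. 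You instead use the coarser grading $\deg(f^aq^bh^cz^dw)=2a+b$ coming from the $\mathbb{Z}$-grading of $\mathfrak{S}$, under which $e$ and $p$ are genuinely degree non-increasing, so no tie-breaking is needed; the price is that the top-degree component of the action is not a scalar multiple of the identity but $\dot e$ (resp.\ $\dot p$) times the substitution $h\mapsto h-2$ (resp.\ $h\mapsto h-1$), and you pay it with the correct observation that such shift operators are unipotent on spaces of bounded $h$-degree, so the only possible eigenvalue of $\pi_n(e\cdot{-})$ on the top component is $\dot e$ (and likewise $\dot p$); your stated expansions of $e\cdot f^aq^bh^cz^dw$ and $p\cdot f^aq^bh^cz^dw$ are indeed what the commutator identities give, including the degree drops you claim. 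A side benefit of your route is that you keep the $z$-powers in the basis $\{f^aq^bh^cz^dw\}$, which is the correct PBW basis of $W_\phi$ since $z\notin\mathfrak{n}^+$; the paper's proof silently suppresses them, and your computation shows the suppression is harmless because the $z$'s created by $[e,q^b]$ and $[p,q^b]$ only appear in strictly lower degree.
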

\begin{proof}
Suppose $\phi^{'}:\mathfrak{n}^{+}\rightarrow \mathbb{C}$ is a Lie
algebra homomorphism which is different from $\phi$ and $v^{'}=\sum
a_{ijk}f^iq^jh^kw$ is a Whittaker vector of type $\phi^{'}.$ Define
$A:=\mathrm{max}\{i+j+k|a_{ijk}\neq0\}$ and let $\succ$ be the
lexicographic order on $\mathbb{N}\times\mathbb{N}\times\mathbb{N}.$
Suppose $(\bar{i},\bar{j},\bar{k})$ is the maximal element of
$\{(i,j,k)|i+j+k=A\}$ and $a_{\bar{i}\bar{j}\bar{k}}=1.$ Then
\begin{align}
v=f^{\bar{i}}q^{\bar{j}}h^{\bar{k}}w+\sum_{i+j+k<A\ or\
(i,j,k)\prec(\bar{i},\bar{j},\bar{k})}a_{ijk}f^iq^jh^kw.
\end{align}
If $\phi^{'}(x)\neq \phi(x),$ $x=e$ or $p,$ on one hand, we have
\begin{align}
xv=\phi^{'}(x)v.
\end{align}
On the other hand, we have
\begin{align}
xv=\phi(x)v+([x,f^{\bar{i}}q^{\bar{j}}h^{\bar{k}}]+\sum_{i+j+k<A\
or\ (i,j,k)\prec(\bar{i},\bar{j},\bar{k})}a_{ijk}[x,f^iq^jh^k])w.
\end{align}
By Lemma 2.3 and identity (2.9), we see that
\begin{align}
xv=\phi(x)f^{\bar{i}}q^{\bar{j}}h^{\bar{k}}w+\sum_{i+j+k<A\ or\
(i,j,k)\prec(\bar{i},\bar{j},\bar{k})}b_{ijk}f^iq^jh^kw,
\end{align} where  $b_{ijk}\in\mathbb{C}.$
Then the lemma follows from (2.7), (2.8) and (2.10).
\end{proof}

\begin{lemm} Suppose $\mathfrak{L}$ be a
finite-dimensional Lie algebra with the triangular decomposition
$\mathfrak{L}=\mathfrak{L}^{+}\oplus \mathfrak{L}_0\oplus
\mathfrak{L}^{-}$ according to the Cartan subalgebra
$\mathfrak{L}_0$, $\phi:\mathfrak{L}^{+}\rightarrow \mathbb{C}$ is a
Lie algebra homomorphism.

$\mathrm{(i)}$ If $V$ is a Whittaker $\mathfrak{L}-$module with
cyclic Whittaker vector of type $\phi$,  then every submodule $W$ of
$\mathfrak{L}-$module $V$ contains a Whittaker vector of type
$\phi.$

$\mathrm{(ii)}$ If the vector space of Whittaker vectors of type
$\phi$ is one dimensional, then $V$ is irreducible.
\end{lemm}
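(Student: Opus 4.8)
The plan is to derive (ii) quickly from (i) and to spend the real effort on (i) via a filtration/leading-term argument, in the same spirit as the proof of Lemma 2.4. Throughout write $\mathfrak{b}^- = \mathfrak{L}_0 \oplus \mathfrak{L}^-$, and for each $x \in \mathfrak{L}^+$ introduce the operator $\bar{x} = x - \phi(x)\,\mathrm{id}$ on $V$; by definition a nonzero $v$ is a Whittaker vector of type $\phi$ precisely when $\bar{x}v = 0$ for all $x$ in a basis of $\mathfrak{L}^+$. Since the cyclic generator $w$ satisfies $U(\mathfrak{L}^+)w = \mathbb{C}w$, the PBW theorem gives $V = U(\mathfrak{b}^-)w$, so every element of $V$ is a (not necessarily unique) combination of vectors $u\,w$ with $u$ a PBW monomial in $\mathfrak{b}^-$.

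Next I would equip $U(\mathfrak{b}^-)$ with a degree adapted to the triangular decomposition. The decomposition $\mathfrak{L} = \mathfrak{L}^+ \oplus \mathfrak{L}_0 \oplus \mathfrak{L}^-$ yields a $\mathbb{Z}$-grading via a height function on the roots, positive on $\mathfrak{L}^+$ (for $\mathfrak{S}$ this is the $\mathbb{Z}$-grading recalled in the introduction). To each PBW monomial $u$ assign the pair $D(u) = (\mathrm{wt}^-(u), \deg_0(u))$, where $\mathrm{wt}^-(u)$ is minus the total degree of its $\mathfrak{L}^-$-factors (a nonnegative integer) and $\deg_0(u)$ counts its $\mathfrak{L}_0$-factors, ordered lexicographically with $\mathrm{wt}^-$ dominant. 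The key computation is that $\bar{x}(uw) = [x,u]\,w$ (because $xw = \phi(x)w$), and that, after re-expanding $[x,u]$ in PBW form and sending all $\mathfrak{L}^+$-factors against $w$ through $\phi$, every resulting monomial has $D$-value strictly below $D(u)$: commuting $x$ past an $\mathfrak{L}^-$-factor raises its height and hence strictly lowers $\mathrm{wt}^-$, or deletes the factor through an $\mathfrak{L}_0$- or $\mathfrak{L}^+$-term (again lowering $\mathrm{wt}^-$), while commuting $x$ past an $\mathfrak{L}_0$-factor produces an $\mathfrak{L}^+$-element that scalarizes on $w$, keeping $\mathrm{wt}^-$ fixed but dropping $\deg_0$. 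Thus $\bar{x}$ maps the filtration piece $V_{(\le\delta)} := \mathrm{span}\{uw : D(u)\le\delta\}$ into $V_{(<\delta)}$.

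With this in hand, (i) follows from a minimality argument valid even though $V$ may be a proper quotient of $W_\phi$ (so the $uw$ need not be independent): for $0 \ne v$, let $\deg v$ be the least $\delta$ with $v \in V_{(\le\delta)}$, which exists because the lexicographic order on $\mathbb{Z}_{\ge0}\times\mathbb{Z}_{\ge0}$ is a well-order. Given a nonzero submodule $W$, choose $v \in W$ nonzero of minimal degree $\delta_0$. For every $x \in \mathfrak{L}^+$ the vector $\bar{x}v = xv - \phi(x)v$ lies in $W \cap V_{(<\delta_0)}$; by minimality it must vanish, so $\bar{x}v = 0$ for all $x$ and $v$ is a Whittaker vector of type $\phi$ in $W$, proving (i). Finally for (ii): if the space of type-$\phi$ Whittaker vectors is one dimensional it equals $\mathbb{C}w$, so the vector produced by (i) is a nonzero multiple of $w$; hence $w \in W$ and $W \supseteq U(\mathfrak{L})w = V$, forcing $W = V$.

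I expect the main obstacle to be the degree bookkeeping in the second paragraph: verifying, uniformly in $x \in \mathfrak{L}^+$ and by induction on the length of $u$, that every monomial occurring in the PBW re-expansion of $[x,u]w$ is strictly $D$-smaller than $u$. The delicate points are handling $\mathfrak{L}^-$-factors whose bracket with $x$ lands back in $\mathfrak{L}^+$ (requiring further reordering, controlled by the fact that the grading is bounded since $\mathfrak{L}$ is finite dimensional) and choosing the secondary coordinate so that the $\mathfrak{L}_0$-commutator terms are genuinely dominated; once the strict decrease is established, the well-ordering argument is immediate.
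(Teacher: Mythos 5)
Your proof is correct, but it takes a genuinely different route from the paper's. The paper disposes of (i) in two lines of structure theory: since $\mathrm{ad}\,x$ acts locally nilpotently on $U(\mathfrak{L})$ for $x\in\mathfrak{L}^{+}$, each operator $x-\phi(x)$ acts locally nilpotently on $V$ (via $(x-\phi(x))^{n}uw=(\mathrm{ad}\,x)^{n}(u)\,w$); hence for any nonzero $v$ in a submodule $W$ the space $U(\mathfrak{L}^{+})v\subseteq W$ is a finite-dimensional module over the solvable algebra $\mathfrak{L}^{+}$, Lie's theorem supplies a common eigenvector in it, and local nilpotency forces the eigenvalue character to be exactly $\phi$. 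You instead avoid Lie's theorem altogether: you build a bi-graded PBW filtration of $V=U(\mathfrak{b}^{-})w$, verify that each $x-\phi(x)$ strictly lowers the lexicographic degree, and then extract the Whittaker vector as a minimal-degree element of $W$ via well-ordering. Both arguments are sound, and your degree bookkeeping (including the recursive reordering when a bracket lands back in $\mathfrak{L}^{+}$, and the dominance of the $\mathfrak{L}^{-}$-weight over the $\mathfrak{L}_{0}$-count) does go through as you sketch it. What the paper's route buys is brevity and a conceptual explanation (Whittaker vectors are forced because $\phi$ is the only possible $\mathfrak{L}^{+}$-character on $V$); what yours buys is self-containedness and a constructive description of where the Whittaker vector sits, in the same leading-term style as the paper's own Lemma 2.4. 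Note that the two approaches lean on essentially the same hidden hypothesis about what ``triangular decomposition'' means: you assume a $\mathbb{Z}$-grading (height function) positive on $\mathfrak{L}^{+}$, while the paper assumes $\mathrm{ad}\,x$ is locally nilpotent for $x\in\mathfrak{L}^{+}$; each of these follows from the other in the finite-dimensional graded setting actually used ($\mathfrak{S}$, $\mathfrak{S}_{1}$, $sl_{2}$, $\mathfrak{H}$), so neither treatment is more general in practice. Your reduction of (ii) to (i) coincides with the paper's.
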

\begin{proof} We see that $\{x-\phi(x)|x\in\mathfrak{L}^+\}$ act locally nilpotent on $V$ since $\mathrm{ad}x$
acts locally nilpotent on $U(\mathfrak{L})$ for any
$x\in\mathfrak{L}^+.$ Thus  for any nonzero submodule $W$ of $V$ and
any nozero vector $v\in W,$ $U(\mathfrak{L}^+)v$ is a
finite-dimensional $\mathfrak{L}^+$-submodule. By Lie Theorem, (i)
holds. (ii) is a direct corollary of (i).
\end{proof}

The following result is a direct corollary of Lemma 2.5.
\begin{coro}
Let $V$ be any Whittaker module of type $\phi$ for Schr\"{o}dinger
algebra $\mathfrak{S}.$ Then every submodule $W$ of $V$ contains a
Whittaker vector of type $\phi.$
\end{coro}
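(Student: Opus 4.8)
The plan is to read this off as the special case $\mathfrak{L}=\mathfrak{S}$ of Lemma 2.5(i), so the only real work is to check that the Schr\"odinger algebra meets the hypotheses of that lemma. First I would observe that $\mathfrak{S}$ is finite-dimensional (it has the six-element basis $\{e,h,f,p,q,z\}$), and that (2.2) already records the decomposition $\mathfrak{S}=\mathfrak{n}^{+}\oplus\mathfrak{h}\oplus\mathfrak{n}^{-}$ relative to the Cartan subalgebra $\mathfrak{h}=\mathbb{C}h+\mathbb{C}z$, with $\mathfrak{n}^{+}=\mathrm{span}\{p,e\}$ and $\mathfrak{n}^{-}=\mathrm{span}\{f,q\}$. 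Thus the abstract data $(\mathfrak{L}^{+},\mathfrak{L}_0,\mathfrak{L}^{-})$ of the lemma is to be matched with $(\mathfrak{n}^{+},\mathfrak{h},\mathfrak{n}^{-})$.

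Next I would confirm that this is a triangular decomposition of the type Lemma 2.5 asks for. From (2.1) one has $[e,p]=0$ and $[f,q]=0$, so $\mathfrak{n}^{+}$ and $\mathfrak{n}^{-}$ are (abelian) subalgebras, while $[h,z]=0$ makes $\mathfrak{h}$ abelian. The relations $[h,p]=p$, $[h,e]=2e$, $[h,f]=-2f$, $[h,q]=-q$ exhibit $\mathfrak{n}^{\pm}$ as the strictly positive and strictly negative $\mathrm{ad}\,h$-eigenspaces; in particular the grading $\mathfrak{S}=\bigoplus_{i=-2}^{2}\mathfrak{S}_i$ is bounded, so every element of $\mathfrak{n}^{+}$ is $\mathrm{ad}$-nilpotent on $\mathfrak{S}$ and hence acts locally nilpotently on $U(\mathfrak{S})$. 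This last property is exactly the one that drives the proof of Lemma 2.5, so I would make a point of flagging it explicitly.

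Finally, by Definition 2.1(ii) any Whittaker $\mathfrak{S}$-module $V$ of type $\phi$ is by hypothesis generated by a cyclic Whittaker vector of type $\phi$, so the premise of Lemma 2.5(i) is met. Applying Lemma 2.5(i) with $\mathfrak{L}^{+}=\mathfrak{n}^{+}$, $\mathfrak{L}_0=\mathfrak{h}$, $\mathfrak{L}^{-}=\mathfrak{n}^{-}$ then gives that every submodule $W$ of $V$ contains a Whittaker vector of type $\phi$, which is the assertion. There is no substantive obstacle here: all the content sits inside Lemma 2.5, and the only thing that needs care is the bookkeeping identifying the Schr\"odinger data with the abstract data and verifying the local-nilpotency condition on $\mathfrak{n}^{+}$ that the lemma silently relies on.
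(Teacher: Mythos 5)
Your proposal is correct and matches the paper exactly: the paper states Corollary 2.6 as ``a direct corollary of Lemma 2.5,'' and your argument is precisely that specialization, with the hypotheses of the lemma (finite-dimensionality, the triangular decomposition (1.2), and the cyclic Whittaker vector) checked explicitly for $\mathfrak{S}$. The extra verification of local nilpotency of $\mathfrak{n}^{+}$ is careful bookkeeping the paper leaves implicit, not a different route.
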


\vskip 3mm\noindent{\section{Whittaker modules of zero level}}

In this section, we classify the simple Whittaker modules of zero
level for the Schr\"{o}dinger algebra $\mathfrak{S}.$ For
simplicity, we first study the Whittaker vectors and Whittaker
modules for the subalgebra $\mathfrak{S}_1$ generated by $\langle
p,q,e,h\rangle$ in subsection 3.1. Then in subsection 3.2, we study
the Whittaker $\mathfrak{S}$-modules induced from the simple
Whittaker $S_1-$modules, and determine the classification of simple
Whittaker $\mathfrak{S}$-modules of zero level. In our study, a
quasi-central element (see Lemma 3.5) of $U(\mathfrak{S})$ given in
[26] plays an important role.

\vskip 3mm\noindent{\subsection{Whittaker modules of zero  level for
subalgebra $\mathfrak{S_1}$ }}

We denote the subalgebra of Schr\"{o}dinger algebra $\mathfrak{S}$
generated by $\{e,p,h,q\}$ by $\mathfrak{S}_1$. The universal
Whittaker module of type $\phi$ for $\mathfrak{S}_1$ is denoted by $
W_{\phi}.$ Then for any $v=\in W_{\phi},$ $v$ can be written in the
form:
\begin{equation} \label{eq:1}
v=\sum_{i=0}^na_ih^i\psi_i(q)w,
\end{equation}
where $\psi_i(x)\in\mathbb{C}[x],\psi_n(x)\neq0$ and $a_n\neq0$.

\begin{prop}
$\mathrm{(i)}$  Suppose $\dot{e}\neq0,\dot{p}=0$ and $W_{\phi}$ is
the universal Whittaker module for $\mathfrak{S}_1$ with cyclic
Whittaker vector $w$. Then $v\in W_{\phi}$ is a Whittaker vector if
and only if $v=uw$ for some $u\in\mathbb{C}[q].$

$\mathrm{(ii)}$ If $\dot{p}\neq0,$ the set of Whittaker vectors in
$W_{\phi}$ is $\mathbb{C}w$.
\end{prop}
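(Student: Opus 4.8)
The plan is to realize $W_\phi$ as a polynomial module and convert the two Whittaker equations into polynomial identities. At zero level $z$ acts by $0$, so $p$ and $q$ commute on $W_\phi$, and by PBW the monomials $h^iq^jw$ form a basis; hence the map $h^iq^j\mapsto h^iq^jw$ identifies $W_\phi$ with $\mathbb{C}[h,q]$ and every $v$ of the shape (3.1) is $v=P(h,q)w$ for a unique $P\in\mathbb{C}[h,q]$. The task is then to find all $P$ with $ev=\dot e v$ and $pv=\dot p v$.

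First I would record how $e$ and $p$ act on $P(h,q)w$. From $[h,p]=p$ (identity (2.4)) one gets $ph^i=(h-1)^ip$, and from $[h,e]=2e$ (identity (2.6)) one gets $eh^i=(h-2)^ie$; together with $[e,q^n]=nq^{n-1}p$ (identity (2.7) with $z=0$) and $ew=\dot ew$, $pw=\dot pw$, a short computation yields the two closed formulas
\[
pv=\dot p\,P(h-1,q)\,w,\qquad ev=\dot e\,P(h-2,q)\,w+\dot p\,(\partial_qP)(h-2,q)\,w.
\]
So $p$ acts as the pure shift $P\mapsto P(h-1,q)$, while $e$ acts by the same kind of shift (by $2$) plus a $q$-derivative correction weighted by $\dot p$.

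The common engine is the elementary shift lemma: if $P(h-c,q)=P(h,q)$ for some $c\neq0$, then $P$ is independent of $h$, i.e. $P\in\mathbb{C}[q]$; indeed, if $\deg_hP=n\geq1$ with leading coefficient $c_n(q)$, comparing the coefficient of $h^{n-1}$ on the two sides forces $-cn\,c_n(q)=0$, a contradiction. For part (i), $\dot p=0$ makes the $p$-equation automatic, and the $e$-equation collapses (after dividing by $\dot e\neq0$) to $P(h-2,q)=P(h,q)$; the shift lemma gives $P=u(q)$, and conversely each $v=uw$ with $u\in\mathbb{C}[q]$ is directly checked to be Whittaker, proving (i). For part (ii), $\dot p\neq0$ lets me divide the $p$-equation by $\dot p$ to obtain $P(h-1,q)=P(h,q)$, so again $P=u(q)$ is $h$-free; substituting this into the $e$-formula (where the shift now does nothing) leaves the single condition $\dot p\,u'(q)=0$, whence $u'=0$ and $v\in\mathbb{C}w$, proving (ii).

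I expect no serious obstacle: the real content is the derivation of the boxed $ev$-formula, where one must carefully track the interaction between the shift $h\mapsto h-2$ and the derivative term $\dot p\,\partial_qP$. The only conceptual point is the shift lemma, and the only hypothesis doing genuine work is zero level, which I use twice — to discard the central term in (2.7) and to make $p$ commute with $q$ so that $p$ acts by a clean shift.
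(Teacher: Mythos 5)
Your proof is correct and takes essentially the same route as the paper's: both arguments rest on the same commutation identities (the paper's (2.2), (2.4), (2.6), i.e.\ $ph^i=(h-1)^ip$, $[e,q^j]=jq^{j-1}p+\tbinom{j}{2}q^{j-2}z$, $eh^i=(h-2)^ie$) and both conclude by a leading-coefficient-in-$h$ (resp.\ in-$q$) argument. Your closed shift formulas for $pv$ and $ev$ together with the shift lemma are a cleaner packaging of what the paper does in coordinates --- applying $(e-\dot{e})^n$ in part (i) and $(p-\dot{p})$, $(e-\dot{e})$ in the two cases of part (ii) and inspecting the leading term --- so the difference is organizational, not conceptual.
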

\begin{proof}
$\mathrm{(i)}$  By (2.4) and $[p,q]w=0,$ it is obvious that $uw$ is
a Whittaker vector if $u\in\mathbb{C}[q]$. For any Whittaker vector
$v\in W_{\phi}$ with form (3.1), if $n\neq0,$ by (2.6), we have
\begin{equation} \label{eq:1}
(e-\dot{e})^{n}v=a_{n}(-2)^{\frac{n(n+1)}{2}}n!\dot{e}\psi_{n}(q)w\neq0,
\end{equation}
a contradiction.

$\mathrm{(ii)}$  For any nonzero vector $v\in
W_{\phi}\setminus\mathbb{C}w,$ we can assume $
v=\sum_{i=0}^na_ih^i\psi_i(q)w, $ where $a_i\in\mathbb{C}, a_n\neq0,
\psi_i(x)\in\mathbb{C}[x]$ and $n+\mathrm{deg}(\psi_i(x))>0.$

If $n=0, \mathrm{deg}(\psi_i(x))>0,$ we see that
$v=\sum_{j=0}^mb_jq^jw,$ where $m>0,b_j\in\mathbb{C},b_m\neq0.$ By
(2.4), we can easily get that
$$(e-\dot{e})v=\dot{p}\sum_{j=1}^mjb_jq^{j-1}w\neq0,$$ which means
$v$ is not a Whittaker vector.

If  $n\neq0,$ by (2.2) we see that
$$(p-\dot{p})v=-n\dot{p}a_nh^{n-1}\psi_n(q)w+\sum_{k<n-1}c_kh^k\psi_k^{'}(q)w\neq0,$$where
$c_k\in\mathbb{C},\psi_k^{'}(x)\in \mathbb{C}[x].$ Thus $v$ is not a
Whittaker vector.
\end{proof}

\begin{lemm}  If $\dot{e}\neq0,\dot{p}=0$, $M$ is a
$\mathfrak{S}_1$-submodule of $W_{\phi},$
 then
$M$ is a maximal submodule of $W_{\phi}$ if and only if there exists
an a scalar $\xi$ such that $M$ is generated by $(q-\xi)w.$
\end{lemm}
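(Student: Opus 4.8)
The plan is to realize $W_\phi$ concretely as a polynomial algebra and then reduce the whole statement to an order-reversing correspondence between submodules and ideals of $\mathbb{C}[q]$.

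First I would record the relevant operator actions. Since we are at level zero, $z$ acts as $0$, and because $\dot p=0$, identity (2.3) together with $pq^jw=q^jpw=\dot p\,q^jw=0$ shows that $p$ annihilates all of $W_\phi$. Using the PBW form (3.1) I identify $W_\phi$ with the polynomial algebra $\mathbb{C}[h,q]$ via $h^iq^jw\leftrightarrow h^iq^j$. Identities (2.5) and (2.6) then give that $h$ acts by left multiplication, $q$ acts by $f(h,q)\mapsto q\,f(h+1,q)$, and $e$ acts by $f\mapsto\dot e\,f(h-2,q)$, while $p$ acts as $0$. In particular, by Proposition 3.2(i) the space of Whittaker vectors is exactly $\mathbb{C}[q]w\cong\mathbb{C}[q]$.

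Next I would reduce the entire lemma to the following claim: every nonzero submodule $M$ has the form $M=g(q)\,\mathbb{C}[h,q]$ for some monic $g\in\mathbb{C}[q]$, namely the ideal of $\mathbb{C}[h,q]$ generated by $g$. Granting this, submodules correspond order-preservingly to ideals $(g)$ of $\mathbb{C}[q]$ via $M\mapsto M\cap\mathbb{C}[q]w$, so $M$ is maximal proper if and only if $(g)$ is a maximal ideal of $\mathbb{C}[q]$, i.e. $g=q-\xi$ is linear; and a direct computation shows the submodule generated by $(q-\xi)w$ is precisely $(q-\xi)\,\mathbb{C}[h,q]$. Both directions of the stated equivalence then follow at once.

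The main work, and the step I expect to be the crux, is proving that claim, i.e. that $M$ is generated by its Whittaker vectors. First, $M\cap\mathbb{C}[q]w$ is an ideal $(g)$ of $\mathbb{C}[q]$ (it is stable under $q$ since $q\cdot f(q)w=q f(q)\,w$), and it is nonzero by Corollary 2.6. The heart of the argument is an $h$-degree reduction driven by the action of $e$: since $e$ acts as the shift $h\mapsto h-2$ up to the nonzero scalar $\dot e$, for $f\in M$ the element $f-\dot e^{-1}ef=f(h,q)-f(h-2,q)$ lies in $M$ and has strictly smaller $h$-degree, with top $q$-coefficient multiplied by the nonzero constant $2\deg_h f$. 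Iterating extracts the leading $q$-coefficient of any $f\in M$ into $M\cap\mathbb{C}[q]w=(g)$; then, since $h$ acts by multiplication, one peels off that top term and inducts on $h$-degree to conclude that every coefficient of $f$ lies in $(g)$, i.e. $f\in g\,\mathbb{C}[h,q]$. The reverse inclusion $g\,\mathbb{C}[h,q]\subseteq M$ is the easy part: from $g\in M$, repeated multiplication by $h$ together with the action of $q$ (which on the $h$-free element $g$ merely multiplies by $q$ after harmless $h$-shifts) generates the whole ideal, and one verifies directly that $g\,\mathbb{C}[h,q]$ is stable under $h$, $q$ and $e$. Thus the only genuine obstacle is bookkeeping the $h$-degree reduction carefully; the rest is the polynomial-ideal dictionary.
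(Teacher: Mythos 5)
Your proposal is correct and is essentially the paper's own argument in cleaner packaging: the engine in both is the $h$-degree reduction by $(e-\dot{e})$ (your $f-\dot{e}^{-1}ef$), which extracts the leading $q$-coefficients of elements of $M$ into $M\cap\mathbb{C}[q]w$, followed by the ideal/division-algorithm structure of $\mathbb{C}[q]$ to produce a single generator and to see that maximality forces it to be linear. Your explicit realization of $W_{\phi}$ as $\mathbb{C}[h,q]$ with shift operators and the resulting submodule--ideal dictionary proves slightly more (it classifies \emph{all} submodules at once), but this is implicit in the paper's proof as well; only beware two minor mis-citations (the Whittaker-vector description is Proposition 3.1(i), not 3.2(i), and killing the $p$-action uses identity (2.2) together with $[p,q^j]w=jq^{j-1}zw=0$, rather than (2.3)).
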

\begin{proof}
Suppose there exists a scalar $\xi$ such that $M$ is generated by
$(q-\xi)w.$ For arbitrary vector $v\in M$, $v$ is of the form
$$v=\sum_{i,j}a_{i,j}h^iq^j(q-\xi)w.$$ By
$(e-\dot{e})(q-\xi)w=0=p(q-\xi)w$ and identities (2.2), (2.3) and
(2.6) we see that $w\notin M.$ Thus $M$ is a proper submodule of
$W_{\phi}.$ For any $v^{'}\notin M,$ we have $v^{'}\equiv
\sum_{i}a_ih^iw(\mathrm{mod}M).$ Thus we can assume that
$$v^{'}=h^kw+\sum_{j<k}a_jh^jw\in (U(\mathfrak{S}_1)w+M).$$
Then by using (2.6) we can easily deduce that $w\in
(U(\mathfrak{S}_1)w+M),$ which means
$U(\mathfrak{S}_1)w+M=W_{\phi}.$ Thus $M$ is maximal.

Now suppose $M$ is a maximal $\mathfrak{S}_1-$submodule of
$W_{\phi}$ and $v\in M$ is an arbitrary nonzero vector. Then $v$
must be of the form (3.1).  We choose $v\in M$ such that $n$ be the
minimal. If $n\neq0,$ by (3.2), we see that $\psi_{n}(q)w\in M$,
which is contrary to the minimal of $n.$ Thus there exists
polynomial $0\neq\psi_0(x)\in\mathbb{C}[x]$ such that $\psi_0(q)w\in
M.$ Set
$$\underline{k}=\mathrm{min}\{\mathrm{deg}(\psi(x))|\psi(q)w\in M\}$$ and
choose monic polynomial $\underline{\psi}(x)\in\mathbb{C}[x]$ such
that $\mathrm{deg}(\underline{\psi}(x))=\underline{k}$ and
$\underline{\psi}(q)w\in M$. We see that $\underline{k}>0$ since
$w\notin M.$ Now we claim that $M$ is generated by
$\underline{\psi}(q)w.$ In fact, for any $0\neq v\in M$ with form
(3.1), similar as we have done in (3.2), we find that $\psi_i(q)w\in
M$ for all $i$ with $a_i\neq0.$ Thus $\underline{\psi}(x)|\psi_i(x)$
by the definition of $\underline{\psi}(x)$ and division algorithm in
$\mathbb{C}[x].$ So $M=U(\mathfrak{S}_1)\underline{\psi}(q)w.$
Furthermore, we see that $\underline{\psi}(x)$ is irreducible since
$M$ is maximal, which means there exists a $\xi\in\mathbb{C}$ such
that $\underline{\psi}(x)=x-\xi$ since
$\underline{\psi}(x)\in\mathbb{C}$ is monic.
\end{proof}

By Lemma 3.2 we see that if
$\phi(e)=\dot{e}\neq0,\phi(p)=\dot{p}=0,$ $M_{\xi}$ is a maximal
$\mathfrak{S}_1-$submodule of $W_{\phi}$ generated by $(q-\xi)w,$
then the corresponding simple quotient module
$W_{\phi}/M_{\xi}=\mathbb{C}[h]\bar{w}$ as vector space. We will
denote the simple quotient module $W_{\phi}/M_{\xi}$ by
$\mathbb{C}[h]_{\phi}^{\xi}\bar{w},$ where $\bar{w}=w+M_{\xi}.$

\begin{prop}$\mathrm{(i)}$  If $\dot{e}\neq0,\dot{p}=0,$
  any simple Whittaker $\mathfrak{S}_1-$module of
type $\phi$ is isomorphic to $\mathbb{C}[h]_{\phi}^{\xi}\bar{w}$ for
some $\xi\in\mathbb{C}.$

$\mathrm{(ii)}$ If $\dot{p}\neq0,$ then $W_{\phi}$ is simple
Whittaker $\mathfrak{S}_1-$module.
\end{prop}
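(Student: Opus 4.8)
The plan is to obtain both parts as consequences of Proposition 3.1, Lemma 3.2, Lemma 2.5 and the universal property recorded in Definition 2.2, so that essentially no new computation inside $W_{\phi}$ is required; the real work has already been done in classifying the Whittaker vectors and the maximal submodules, and the present statement amounts to packaging those results together.

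For part (ii), I would first invoke Proposition 3.1(ii): when $\dot{p}\neq0$ the space of Whittaker vectors of $W_{\phi}$ is exactly $\mathbb{C}w$, hence one-dimensional. To conclude that $W_{\phi}$ is simple it then suffices to apply Lemma 2.5(ii) to the finite-dimensional algebra $\mathfrak{S}_1$. The one point to check is that $\mathfrak{S}_1$ genuinely fits the hypotheses of Lemma 2.5: I would record the decomposition $\mathfrak{S}_1=\mathfrak{L}^{+}\oplus\mathfrak{L}_0\oplus\mathfrak{L}^{-}$ with $\mathfrak{L}^{+}=\mathrm{span}\{e,p\}=\mathfrak{n}^{+}$, $\mathfrak{L}_0=\mathrm{span}\{h,z\}$ and $\mathfrak{L}^{-}=\mathbb{C}q$, and note that since each $\mathrm{ad}\,x$ with $x\in\mathfrak{n}^{+}$ strictly raises the $\mathbb{Z}$-grading (which is bounded above by $2$), it acts locally nilpotently on $U(\mathfrak{S}_1)$; this is precisely the input used in the proof of Lemma 2.5, and the same framework was already applied to the full algebra in Corollary 2.6. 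With the Whittaker space one-dimensional, Lemma 2.5(ii) yields the simplicity of $W_{\phi}$.

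For part (i), let $V$ be a simple Whittaker $\mathfrak{S}_1$-module of type $\phi$ with cyclic Whittaker vector. The universal property of $W_{\phi}$ (available for $\mathfrak{S}_1$ by the construction opening Subsection 3.1) furnishes a surjective homomorphism $f\colon W_{\phi}\rightarrow V$; since $V$ is simple and nonzero, $K:=\mathrm{Ker}\,f$ is a maximal $\mathfrak{S}_1$-submodule of $W_{\phi}$. By Lemma 3.2, under the standing hypothesis $\dot{e}\neq0,\dot{p}=0$ every maximal submodule is of the form $M_{\xi}=U(\mathfrak{S}_1)(q-\xi)w$ for some $\xi\in\mathbb{C}$; hence $K=M_{\xi}$ and therefore $V\cong W_{\phi}/M_{\xi}=\mathbb{C}[h]_{\phi}^{\xi}\bar{w}$, as claimed.

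The main obstacle is not located in this proposition but was front-loaded into Lemma 3.2, whose proof selects the minimal-degree polynomial $\underline{\psi}(q)w$ inside a maximal submodule and forces it to be linear. Given that classification, what remains is only to confirm that the hypotheses of Lemma 2.5 are legitimately met by $\mathfrak{S}_1$ and that the universal property is at hand for $\mathfrak{S}_1$. I note finally that the statement asserts only the existence of $\xi$, so no separate argument showing that distinct values of $\xi$ produce non-isomorphic modules is needed at this stage.
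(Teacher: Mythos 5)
Your proposal is correct and follows essentially the same route as the paper: part (i) realizes the simple module as a quotient of $W_{\phi}$ by a maximal submodule, which Lemma 3.2 identifies as some $M_{\xi}$, and part (ii) is obtained directly from Proposition 3.1(ii) together with Lemma 2.5(ii). The only difference is that you explicitly verify that $\mathfrak{S}_1$ (with $\mathfrak{L}^{+}=\mathrm{span}\{e,p\}$, $\mathfrak{L}_0=\mathrm{span}\{h,z\}$, $\mathfrak{L}^{-}=\mathbb{C}q$) satisfies the hypotheses of Lemma 2.5, a detail the paper leaves implicit.
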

\begin{proof}
$\mathrm{(i)}$ For any simple Whittaker $\mathfrak{S}_1-$module $V$,
there exists a maximal submodule $M$ of $W_{\phi}$ such that $V\cong
W_{\phi}/M.$ By Lemma 3.2, there exists some $\xi\in\mathbb{C}$ such
that $M=M_{\xi},$ so we have $V\cong
\mathbb{C}[h]_{\phi}^{\xi}\bar{w}$ for some $\xi\in\mathbb{C}.$

$\mathrm{(ii)}$ This is the direct corollary of Lemma 2.5 and
Proposition 3.1 (ii).
\end{proof}

\vskip 3mm\noindent{\subsection{Simple Whittaker modules for
$\mathfrak{S}$ of zero level }}

In subsection 3.1, we obtain all the simple $\mathfrak{S}_1-$modules
$\mathbb{C}[h]_{\phi}^{\xi}$ (when $\dot{e}\neq0,\dot{p}=0$) and
$W_{\phi}$ (when $\dot{p}\neq0)$. For $\dot{e}\neq0,\dot{p}=0,\xi\in
\mathbb{C},$ in the following, we set
 $$L_{\phi}^{\xi}:=U(\mathfrak{S})\otimes_{U(\mathfrak{S}_1)}\mathbb{C}[h]_{\phi}^{\xi}.$$
For $\dot{p}\neq0,$ we set
$$\widetilde{W}_{\phi}=U(\mathfrak{S})\otimes_{U(\mathfrak{S}_1)}V.$$

In this subsection, we first study the Whittaker vectors and
simplicity of the above induced Whittaker modules. Then we classify
the simple Whittaker modules of zero level over $\mathfrak{S}$.

\begin{lemm} Suppose $\dot{p}\neq0.$ For $\bar{\mathfrak{c}}\in U(\mathfrak{S})$ with form $\bar{\mathfrak{c}}=f+\bar{\mathfrak{c}},$ where
$\bar{\mathfrak{c}}_1\in U(\mathfrak{S}_1),$ such that
$\bar{\mathfrak{c}}w$ is a Whittaker vector, then
$$\bar{\mathfrak{c}}=f-\frac{1}{\dot{p}}q(1+h)-\frac{\dot{e}}{\dot{p}^2}q^2+c,$$
where $c\in\mathbb{C}.$
\end{lemm}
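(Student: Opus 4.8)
The plan is to turn the Whittaker condition on $\bar{\mathfrak{c}}w$ into two explicit linear equations for the vector $v:=\bar{\mathfrak{c}}_1w$ inside $V=W_{\phi}$, and then to solve those equations in the PBW basis $\{h^iq^jw\}$ of $W_{\phi}$.

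First I would record that, by transitivity of induction, $\widetilde{W}_{\phi}=U(\mathfrak{S})\otimes_{U(\mathfrak{S}_1)}V$ is isomorphic to the universal Whittaker $\mathfrak{S}$-module of type $\phi$ (using that, for $\dot{p}\neq0$, Proposition 3.3(ii) makes $V=W_{\phi}$ itself the universal $\mathfrak{S}_1$-module). Hence Lemma 2.4 applies and any Whittaker vector of $\widetilde{W}_{\phi}$ is automatically of type $\phi$; in particular $(e-\dot{e})\bar{\mathfrak{c}}w=0$ and $(p-\dot{p})\bar{\mathfrak{c}}w=0$. Commuting $e$ and $p$ past $\bar{\mathfrak{c}}$ and using $ew=\dot{e}w$, $pw=\dot{p}w$, these collapse to $[e,\bar{\mathfrak{c}}]w=0$ and $[p,\bar{\mathfrak{c}}]w=0$. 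Writing $\bar{\mathfrak{c}}=f+\bar{\mathfrak{c}}_1$ and using $[e,f]=h$, $[p,f]=-q$, I obtain the two target equations $(e-\dot{e})v=-hw$ and $(p-\dot{p})v=qw$, where $v=\bar{\mathfrak{c}}_1w\in V$.

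Next I would solve these in $W_{\phi}=\mathbb{C}[h,q]w$, writing $v=\sum_i h^i\psi_i(q)w$ as in (3.1). Using identity (2.3) together with $[p,q]=z=0$, one gets $(p-\dot{p})h^iq^jw=\dot{p}\,[(h-1)^i-h^i]\,q^jw$; since $(h-1)^i-h^i$ has $h$-degree $i-1$, the equation $(p-\dot{p})v=qw$ forces the $h$-degree of $v$ to be at most $1$ and pins down $\psi_1(q)=-q/\dot{p}$. Then, using identities (2.5) and (2.6) (again at zero level), one computes $(e-\dot{e})h^iq^jw=\dot{e}\,[(h-2)^i-h^i]\,q^jw+j\dot{p}\,(h-2)^iq^{j-1}w$; substituting $v=\psi_0(q)w-\frac{1}{\dot{p}}hqw$ into $(e-\dot{e})v=-hw$ makes the $hw$-terms cancel and leaves the first-order equation $\dot{p}\,\psi_0'(q)=-\frac{2\dot{e}}{\dot{p}}q-2$. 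Integrating gives $\psi_0(q)=-\frac{\dot{e}}{\dot{p}^2}q^2-\frac{2}{\dot{p}}q+c$ with an arbitrary constant $c$, and reassembling (after reordering $qh=hq+q$) yields exactly $v=\big(-\frac{1}{\dot{p}}q(1+h)-\frac{\dot{e}}{\dot{p}^2}q^2+c\big)w$, which is the asserted form of $\bar{\mathfrak{c}}$ acting on $w$.

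I expect the main obstacle to be the noncommutative bookkeeping in the two explicit action formulas for $e$ and $p$ on $h^iq^jw$, and in particular verifying that the $hw$-contributions cancel in the $e$-equation so that the residual first-order equation for $\psi_0$ is consistent. The structural inputs needed for rigor are the automatic identification of the Whittaker type via Lemma 2.4 (so that only the type-$\phi$ equations must be imposed) and Proposition 3.1(ii): the integration constant $c$ corresponds precisely to the one-dimensional space $\mathbb{C}w$ of Whittaker vectors in $V$, which is exactly what makes $\bar{\mathfrak{c}}$ unique up to the additive scalar $c$.
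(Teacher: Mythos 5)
Your proposal is correct and follows essentially the same route as the paper's proof: expand the $U(\mathfrak{S}_1)$-part in the PBW basis $h^i\psi_i(q)w$, impose the two conditions $(p-\dot{p})\bar{\mathfrak{c}}w=0$ and $(e-\dot{e})\bar{\mathfrak{c}}w=0$ using identities (2.2)--(2.6), and solve --- the $p$-equation forces $h$-degree one with $\psi_1(q)=-q/\dot{p}$, and the $e$-equation then determines $\psi_0$ up to the additive constant $c$. Your explicit justification that the Whittaker vector is automatically of type $\phi$ (via Lemma 2.4 and transitivity of induction) is a detail the paper leaves implicit, but it does not change the argument.
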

\begin{proof}
Suppose $\bar{\mathfrak{c}}=f+\sum_{i=0}^na_i\psi_i(q)h^i,$ where
$a_i\in\mathbb{C},a_{n}\neq0,\psi_i(x)\in\mathbb{C}[x].$ Then by
$(p-\dot{p})\bar{\mathfrak{c}}w=0,$ we get that
$$-qw+\dot{p}\sum_{i=1}^na_i\psi_i(q)\big(\sum_{k=1}^i(-1)^k\binom{i}{k}h^{i-k}\big)w=0.$$
So $n=1$  and $-qw-\dot{p}a_1\psi_1(q)w=0.$ Thus
$\psi_1(q)=-\frac{1}{a_1\dot{p}}q.$ If we assume that
$\psi_1(x)=b_0+b_1x+\cdots+b_sx^s,$ then we can rewrite
$\bar{\mathfrak{c}}$ as following
$$\bar{\mathfrak{c}}=f+a_0(b_0+b_1q+\cdots+b_sq^s)-\frac{1}{\dot{p}}qh.$$
By $(e-\dot{e})\bar{\mathfrak{c}}w=0,$ we get that $s=2$ and
$a_0b_1\dot{p}+2a_0b_2\dot{p}q+1+\frac{2\dot{e}}{\dot{p}}q=0.$ So
$$b_1=-\frac{1}{a_0\dot{p}}, b_2=-\frac{\dot{e}}{a_0\dot{p}^2},$$
which means that
$\bar{\mathfrak{c}}=(f-\frac{1}{\dot{p}}q(1+h)-\frac{\dot{e}}{\dot{p}^2}q^2)+a_0b_0.$
\end{proof}

From Lemma 3.4, we see that
$$(\bar{\mathfrak{c}}-a)w=(f-\frac{1}{\dot{p}}q(1+h)-\frac{\dot{e}}{\dot{p}^2}q^2)w=(fp^2-q(1+h)p-q^2e)(\frac{1}{\dot{p}^2}w).$$

The following key lemma is from [10], we will call $\mathfrak{c}$ a
quasi-central element of $U(\mathfrak{S}):$
\begin{lemm} Let $\mathfrak{c}=fp^2-q(1+h)p-q^2e.$ Then the center of
$U(\mathfrak{S}/\mathbb{C}z)=\mathbb{C}[\mathfrak{c}].$
\end{lemm}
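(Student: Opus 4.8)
The plan is to show that the quotient algebra $U(\mathfrak{S}/\mathbb{C}z)$, which is the universal enveloping algebra $U(\bar{\mathfrak{S}})$ of the Lie algebra $\bar{\mathfrak{S}}=\mathfrak{S}/\mathbb{C}z$, has center exactly the polynomial ring $\mathbb{C}[\mathfrak{c}]$. There are two things to verify: first, that $\mathfrak{c}=fp^2-q(1+h)p-q^2e$ (the image of our lifted element in the quotient, where $z$ is now central and we may set it to a convenient value) is genuinely central in $U(\bar{\mathfrak{S}})$; second, that nothing outside $\mathbb{C}[\mathfrak{c}]$ is central.

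**First I would verify centrality by a direct bracket computation.** It suffices to check that $[x,\mathfrak{c}]=0$ in $U(\bar{\mathfrak{S}})$ for each generator $x\in\{e,h,f,p,q\}$. Using the defining relations (1.1) together with the enveloping-algebra identities in Lemma 2.3, one expands each commutator $[x,fp^2]$, $[x,q(1+h)p]$, $[x,q^2e]$ and confirms the cancellations. For instance $[h,\mathfrak{c}]=0$ follows because each summand of $\mathfrak{c}$ has total $h$-weight zero ($f$ has weight $-2$, $p$ weight $+1$, $e$ weight $+2$, $q$ weight $-1$, and one reads off that $fp^2,\ qhp,\ qp,\ q^2e$ are all weight $0$), and the remaining brackets $[e,\mathfrak{c}]$, $[f,\mathfrak{c}]$, $[p,\mathfrak{c}]$, $[q,\mathfrak{c}]$ are the substantive checks. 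This is routine but must be carried out carefully in the noncommutative ring, keeping track of ordering; this is where the $(1+h)$ correction term and the precise coefficients earn their place.

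**The main obstacle is the reverse inclusion**, namely that every central element lies in $\mathbb{C}[\mathfrak{c}]$. The cleanest route I would take is to exploit the $\mathbb{Z}$-grading $\bar{\mathfrak{S}}=\bigoplus_{i=-2}^2\bar{\mathfrak{S}}_i$, which induces a grading on $U(\bar{\mathfrak{S}})$; since $\mathrm{ad}\,h$ is diagonalizable with integer eigenvalues, any central element commutes in particular with $h$, forcing it to have $h$-weight $0$, i.e. to lie in the weight-zero subspace. I would then pass to the associated graded algebra $\mathrm{gr}\,U(\bar{\mathfrak{S}})\cong S(\bar{\mathfrak{S}})$, where by a theorem of the Harish-Chandra/Kostant type the center corresponds to the $\bar{\mathfrak{S}}$-invariants under the adjoint action, and compute these invariants as a polynomial algebra. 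The symbol of $\mathfrak{c}$ is $\bar{\mathfrak{c}}=fp^2-qhp-q^2e$ in $S(\bar{\mathfrak{S}})$; showing that the invariant ring $S(\bar{\mathfrak{S}})^{\bar{\mathfrak{S}}}$ is freely generated by $\bar{\mathfrak{c}}$ reduces to a finite invariant-theory computation, after which a standard filtered/graded lifting argument returns $Z(U(\bar{\mathfrak{S}}))=\mathbb{C}[\mathfrak{c}]$.

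**Since the statement is attributed to reference [10]**, in the paper itself I would simply invoke that citation rather than reproduce the argument; but the computation above is the content one would supply if proving it from scratch. The one point demanding care is the interplay between the filtration and the grading: one must confirm that a central element of $U(\bar{\mathfrak{S}})$ whose symbol is a polynomial in $\bar{\mathfrak{c}}$ differs from the corresponding polynomial in $\mathfrak{c}$ by a central element of strictly lower filtration degree, so that an induction on degree closes. Given that $\mathfrak{c}$ is a single generator in a small explicit Lie algebra, this induction terminates quickly and presents no real difficulty once the invariant-theory step is settled.
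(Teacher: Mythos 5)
The paper offers no proof of this lemma at all: it is imported verbatim from [10] (L\"u--Mazorchuk--Zhao), which is exactly the move you say you would make, so there is no in-paper argument to compare yours against; what you add is a from-scratch sketch, and that sketch is structurally sound. The centrality check is routine and does go through: in $U(\bar{\mathfrak{S}})$ with $\bar{\mathfrak{S}}=\mathfrak{S}/\mathbb{C}z$ one has, for instance, $[p,\mathfrak{c}]=[p,f]p^2-q[p,h]p=-qp^2+qp^2=0$ and $[e,\mathfrak{c}]=hp^2-(hp^2-2qep)-2qpe=2q[e,p]=0$, using $[p,q]=0$ in the quotient; the checks for $h,q,f$ are similar. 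Your filtered/graded reduction of the hard inclusion is also the right mechanism, and in fact needs less than you invoke: no Harish-Chandra/Kostant/Duflo-type theorem (which one should be wary of citing casually for a non-reductive algebra), only the elementary fact that the adjoint action preserves the PBW filtration, so the top symbol of a central element is an $\mathrm{ad}$-invariant in $S(\bar{\mathfrak{S}})$; then, granted $S(\bar{\mathfrak{S}})^{\bar{\mathfrak{S}}}=\mathbb{C}[\bar{\mathfrak{c}}]$ with $\bar{\mathfrak{c}}=fp^2-qhp-q^2e$ homogeneous of degree $3$, subtracting a suitable $c\,\mathfrak{c}^{k}$ (central by the first step) lowers the filtration degree and induction closes. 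The one load-bearing step you assert rather than prove is precisely that invariant-theory identity, and it is the real content of the lemma; it does hold, and the finite computation you allude to can be done as follows: as an $sl_2$-module, $\bar{\mathfrak{S}}$ is adjoint plus natural, so classical invariant theory of a binary quadratic together with a binary linear form gives $S(\bar{\mathfrak{S}})^{sl_2}=\mathbb{C}[D,\bar{\mathfrak{c}}]$ with $D=h^2+4ef$ the Casimir/discriminant and $\bar{\mathfrak{c}}$ the evaluation invariant; since $\mathrm{ad}(p)$ is a derivation with $\mathrm{ad}(p)\bar{\mathfrak{c}}=0$ but $\mathrm{ad}(p)D=-2hp-4eq\neq0$, and $D,\bar{\mathfrak{c}}$ are algebraically independent, an $\mathrm{ad}(p)$-invariant polynomial in $D$ and $\bar{\mathfrak{c}}$ cannot involve $D$, whence $S(\bar{\mathfrak{S}})^{\bar{\mathfrak{S}}}=\mathbb{C}[\bar{\mathfrak{c}}]$. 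With that paragraph supplied, your outline is a complete and correct proof; without it, the sketch, like the paper, ultimately defers the substance of the lemma elsewhere.
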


We denote by $U(\langle q,h\rangle)$ the universal enveloping
algebra of the subalgebra generated by $q$ and $h.$ For
$u=\sum_{i=0}^na_i\psi_{i}(q,h)f^i\in U(\mathfrak{S}),$ where
$\psi_{i}(q,h)\in U(\langle q,h\rangle)$,  $a_i\in\mathbb{C},$
$a_n\neq0,$ $\psi_{n}(q,h)\neq0,$ we define $\mathrm{deg}_f(u)=n.$
Define
$$W_{\phi}^k:=\mathrm{span}\{uw|\mathrm{deg}_f(u)\leq k.\}$$

\begin{lemm}
If $\dot{p}\neq0,$ then for any $a\in \mathbb{C}, i\in\mathbb{N},$
there exists $\psi_k(q,h)\in U(\langle q,h\rangle),$ $
k=0,\cdots,i,$ such that
$$f^iw=\frac{1}{\dot{p}^{2i}}(\mathfrak{c}-a)^iw+\psi_1(q,h)(\mathfrak{c}-a)^{i-1}w+\cdots+\psi_{i-1}(q,h)(\mathfrak{c}-a)w+\psi_i(q,h)w.$$
\end{lemm}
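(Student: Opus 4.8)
The plan is to induct on $i$, the key structural input being that at zero level $z$ acts as $0$, so by Lemma 3.5 the element $\mathfrak{c}$ acts centrally on the module; in particular $\mathfrak{c}$ commutes with the actions of $f$, $q$ and $h$, and hence so does every power $(\mathfrak{c}-a)^{j}$. For the base case $i=1$ I would apply $\mathfrak{c}=fp^{2}-q(1+h)p-q^{2}e$ to $w$ and use the Whittaker conditions $pw=\dot{p}w$, $ew=\dot{e}w$ (so that $p^{2}w=\dot{p}^{2}w$), which gives
$$\mathfrak{c}w=\dot{p}^{2}fw-\dot{p}\,q(1+h)w-\dot{e}\,q^{2}w.$$
Since $\dot{p}\neq0$, solving for $fw$ and writing $\mathfrak{c}w=(\mathfrak{c}-a)w+aw$ yields
$$fw=\frac{1}{\dot{p}^{2}}(\mathfrak{c}-a)w+\Big(\frac{a}{\dot{p}^{2}}+\frac{1}{\dot{p}}q(1+h)+\frac{\dot{e}}{\dot{p}^{2}}q^{2}\Big)w,$$
which is the asserted form with $\psi_{1}(q,h)=\frac{a}{\dot{p}^{2}}+\frac{1}{\dot{p}}q(1+h)+\frac{\dot{e}}{\dot{p}^{2}}q^{2}\in U(\langle q,h\rangle)$.

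For the inductive step I would assume the formula for $f^{i}w$ and multiply it on the left by $f$. Two commutation facts drive the computation. First, by centrality of $\mathfrak{c}$ the factors $(\mathfrak{c}-a)^{j}$ may be slid freely past $f$ and past any coefficient in $U(\langle q,h\rangle)$, so all central powers can be pushed to the right while the polynomial coefficients stay on the left. Second, since $[f,q]=0$ and $[f,h]=2f$, the assignment $h\mapsto h+2$, $q\mapsto q$ preserves $[h,q]=-q$ and therefore extends to an algebra automorphism $\tau$ of $U(\langle q,h\rangle)$ satisfying $f\,\psi(q,h)=\tau(\psi)(q,h)\,f$ for every $\psi\in U(\langle q,h\rangle)$. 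Combining the two, each summand $f\,\psi_{k}(q,h)(\mathfrak{c}-a)^{i-k}w$ rewrites as $\tau(\psi_{k})(q,h)(\mathfrak{c}-a)^{i-k}fw$, into which I substitute the base-case expression for $fw$; the outcome is again a $U(\langle q,h\rangle)$-linear combination of the vectors $(\mathfrak{c}-a)^{j}w$. A glance at the highest power shows its coefficient is $\frac{1}{\dot{p}^{2}}\cdot\frac{1}{\dot{p}^{2i}}=\frac{1}{\dot{p}^{2(i+1)}}$, so the leading term has the required shape and the induction closes.

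The binomial bookkeeping in the base case and the reindexing of the sum in the inductive step are routine. The one genuinely delicate point is the interplay of noncommutativity: I must be sure that pushing $f$ rightward through the coefficients (via $\tau$) and commuting the central powers $(\mathfrak{c}-a)^{j}$ past $q$ and $h$ never produce terms outside $U(\langle q,h\rangle)$. This is guaranteed exactly because $\tau$ preserves $U(\langle q,h\rangle)$ and because the centrality of $\mathfrak{c}$ makes $(\mathfrak{c}-a)$ behave like a scalar with respect to $q,h,f$. Accordingly, the single step I would verify carefully before running the induction is that $\mathfrak{c}$ really commutes with $f$, $q$ and $h$ on the module, which follows from Lemma 3.5 together with $z$ acting as $0$ at zero level.
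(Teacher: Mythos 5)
Your proof is correct and takes essentially the same route as the paper's: both argue by induction on $i$, resting on the identity $fw=\frac{1}{\dot{p}^{2}}\mathfrak{c}w+\frac{1}{\dot{p}}q(1+h)w+\frac{\dot{e}}{\dot{p}^{2}}q^{2}w$ (which the paper imports from Lemma 3.4) together with the observation that at level zero Lemma 3.5 makes $\mathfrak{c}$ act centrally, so powers of $\mathfrak{c}-a$ slide past $f$, $q$, $h$. The differences are only in bookkeeping: you derive the base identity directly from the Whittaker conditions and control the lower-order terms explicitly via the automorphism $h\mapsto h+2$, $q\mapsto q$ of $U(\langle q,h\rangle)$, whereas the paper computes modulo the filtration $W_{\phi}^{k}$ and absorbs those terms by the induction hypothesis.
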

\begin{proof}  Suppose Lemma
3.6 is proved for $i\leq k.$  By Lemma 3.4 and Lemma 3.5, we have
\begin{align*}
&f^{k+1}w=
f^{{k}}(f-\frac{1}{\dot{p}}q(1+h)-\frac{\dot{e}}{\dot{p}^2}q^2)w(\mathrm{mod}W_{\phi}^k)
\\
&\equiv f^{{k}}(fp^2-q(1+h)p-q^2e)\frac{1}{\dot{p}^2}w
(\mathrm{mod}W_{\phi}^k)
\\
&\equiv f^{{k-1}}(fp^2-q(1+h)p-q^2e)f\frac{1}{\dot{p}^2}w
(\mathrm{mod}W_{\phi}^k)\\
&\equiv f^{{k-2}}(fp^2-q(1+h)p-q^2e)^2\frac{1}{\dot{p}^4}w
(\mathrm{mod}W_{\phi}^k)\\
&\equiv \cdots\\
&\equiv (fp^2-q(1+h)p-q^2e)^{k+1}\frac{1}{\dot{p}^{2(k+1)}}w
(\mathrm{mod}W_{\phi}^k).
\end{align*}
By induction, Lemma 3.6 holds.
\end{proof}

\begin{prop} $\mathrm{(i)}$ If $\dot{e}\neq0,  \dot{p}=0, \xi\neq0,$  the set of  Whittaker vectors of
$L_{\phi}^{\xi}$ is $\mathbb{C}\bar{w}.$

$\mathrm{(ii)}$ If $\dot{p}\neq0, $  the set of  Whittaker vectors
of $\widetilde{W}_{\phi}$ is $\mathbb{C}[\mathfrak{c}]w,$ where
\begin{align*}
\mathfrak{c}=fp^2-q(1+h)p-q^2e.
\end{align*}
\end{prop}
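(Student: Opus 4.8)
The plan is to use that, at level zero (so $z$ acts as $0$), the single generator of $\mathfrak{S}$ not already lying in $\mathfrak{S}_1$ is $f$. By the PBW theorem both induced modules are therefore free over $\mathbb{C}[f]$: every vector has a unique expression $v=\sum_{i=0}^{n}f^{i}v_{i}$ with the $v_{i}$ in the inducing $\mathfrak{S}_1$-module and $v_{n}\neq0$, and I will argue by this $f$-grading. Both $\supseteq$ inclusions are immediate: $\bar{w}$ (resp. $w$) is a Whittaker vector of type $\phi$ by construction, and in (ii) every $\mathfrak{c}^{k}w$ is a Whittaker vector because $\mathfrak{c}$ is central in $U(\mathfrak{S}/\mathbb{C}z)$ by Lemma 3.5, hence commutes with $e$ and $p$ and preserves the Whittaker condition. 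The content is the reverse inclusion in each case.

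For (i), with $\dot{p}=0$ and $\xi\neq0$, I first record the action on the inducing module $\mathbb{C}[h]_{\phi}^{\xi}\bar{w}$: from $e\bar{w}=\dot{e}\bar{w}$, $p\bar{w}=0$, $q\bar{w}=\xi\bar{w}$ and identities (2.2), (2.3), (2.6) one computes $p\,h^{k}\bar{w}=0$, $q\,h^{k}\bar{w}=\xi(h+1)^{k}\bar{w}$ and $e\,h^{k}\bar{w}=\dot{e}(h-2)^{k}\bar{w}$; thus $p$ acts as $0$, and since $\xi\neq0$ the operator $q$ acts injectively (it is $\xi$ times the invertible shift $h\mapsto h+1$). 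Given a Whittaker vector $v=\sum_{i=0}^{n}f^{i}v_{i}$, applying $p$ and using $[p,f^{i}]=-iqf^{i-1}$ (identity (2.1)), $[f,q]=0$ and $pv_{i}=0$ yields $pv=-\sum_{i=1}^{n}i\,f^{i-1}(qv_{i})$; the condition $pv=\dot{p}v=0$ together with directness of the $f$-grading forces $qv_{i}=0$ for all $i\geq1$, so injectivity of $q$ kills every $v_{i}$ with $i\geq1$. Hence $v=v_{0}\in\mathbb{C}[h]_{\phi}^{\xi}\bar{w}$, and $ev_{0}=\dot{e}v_{0}$ with $\dot{e}\neq0$ reads $g(h-2)=g(h)$ for $v_{0}=g(h)\bar{w}$, forcing $g$ constant and $v\in\mathbb{C}\bar{w}$.

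For (ii), with $\dot{p}\neq0$, the crucial step is to replace the PBW variable $f$ by $\mathfrak{c}$ and establish the direct sum $\widetilde{W}_{\phi}=\bigoplus_{i\geq0}\mathfrak{c}^{i}V$. Since $f^{i}V=U(\langle q,h\rangle)f^{i}w$ (commute $U(\langle q,h\rangle)$ past $f^{i}$ using $[h,f]=-2f$, $[q,f]=0$) and $V=U(\langle q,h\rangle)w$, we have $\widetilde{W}_{\phi}=\bigoplus_{i}U(\langle q,h\rangle)f^{i}w$; Lemma 3.6 (with $a=0$) expresses each $f^{i}w$ as $\dot{p}^{-2i}\mathfrak{c}^{i}w$ plus a $U(\langle q,h\rangle)$-combination of lower $\mathfrak{c}^{k}w$, a unitriangular change of basis on each $f$-layer, so $\widetilde{W}_{\phi}=\bigoplus_{i}U(\langle q,h\rangle)\mathfrak{c}^{i}w=\bigoplus_{i}\mathfrak{c}^{i}V$ by centrality of $\mathfrak{c}$. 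Now write a Whittaker vector as $v=\sum_{i}\mathfrak{c}^{i}v_{i}$ with $v_{i}\in V$. Centrality gives $(e-\dot{e})v=\sum_{i}\mathfrak{c}^{i}(e-\dot{e})v_{i}$ and $(p-\dot{p})v=\sum_{i}\mathfrak{c}^{i}(p-\dot{p})v_{i}$; since the sum is direct and $(e-\dot{e})v_{i},(p-\dot{p})v_{i}\in V$, each $v_{i}$ is a Whittaker vector of type $\phi$ for $\mathfrak{S}_1$, hence $v_{i}\in\mathbb{C}w$ by Proposition 3.1(ii), and therefore $v\in\mathbb{C}[\mathfrak{c}]w$.

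The main obstacle I expect is the decomposition $\widetilde{W}_{\phi}=\bigoplus_{i}\mathfrak{c}^{i}V$ in (ii): one must verify that passing from $f$ to the central element $\mathfrak{c}$ really is a unitriangular change of basis on each filtered layer. Lemma 3.6 supplies exactly this at the cyclic vector $w$, and the only care needed is to propagate it to all of $V=U(\langle q,h\rangle)w$ via centrality of $\mathfrak{c}$; the underlying reason it works is that $p$, and hence $p^{2i}$, acts bijectively on the simple module $V$, because $p-\dot{p}$ is locally nilpotent as in the proof of Lemma 2.5 while $\dot{p}\neq0$. In (i) the one delicate point is the precise action of $q$ on $\mathbb{C}[h]_{\phi}^{\xi}\bar{w}$, since it is injectivity of $q$, equivalently the hypothesis $\xi\neq0$, that eliminates the higher $f$-degree terms. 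Once these two points are secured, both parts collapse to Proposition 3.1 and the centrality of $\mathfrak{c}$.
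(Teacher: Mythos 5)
Your proof is correct, and for part (i) it is essentially the paper's own argument: apply $p$ to $v=\sum_i f^iv_i$, use $[p,f^i]=-iqf^{i-1}$ and the fact that $q$ acts on $\mathbb{C}[h]_{\phi}^{\xi}\bar{w}$ as $\xi$ times the invertible shift $h\mapsto h+1$ (this is where $\xi\neq0$ enters), kill the higher $f$-layers, then apply $e-\dot{e}$ to force the remaining polynomial in $h$ to be constant. For part (ii) you use exactly the paper's key ingredients -- centrality of $\mathfrak{c}$ (Lemma 3.5), the triangular relation between $f^iw$ and $\mathfrak{c}^iw$ (Lemma 3.6), and Proposition 3.1(ii) -- but you package them differently: the paper runs an induction on $\deg_f(v)$, at each step arguing separately that the top coefficient $\psi_{k+1}(q,h)$ is a nonzero scalar (a step it only sketches, via acting by $p-\dot{p}$ and $e-\dot{e}$ ``in turn'') and then subtracting a multiple of $\mathfrak{c}^{k+1}w$ to drop the degree, whereas you first globalize Lemma 3.6 into the decomposition $\widetilde{W}_{\phi}=\bigoplus_{i\geq0}\mathfrak{c}^iV$ (a unitriangular change of basis on the PBW layers $\bigoplus_i U(\langle q,h\rangle)f^iw$, with invertible scalar diagonal $\dot{p}^{-2i}$) and then project the Whittaker condition onto each layer, so that Proposition 3.1(ii) handles all coefficients at once. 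What your route buys is precisely that the paper's hand-waved leading-coefficient step disappears: constancy of every coefficient falls out of Proposition 3.1(ii) applied componentwise; it also makes explicit the freeness of $\widetilde{W}_{\phi}$ over $\mathbb{C}[\mathfrak{c}]$, which is the structural fact behind the filtration in Theorem 3.8(ii). What the paper's route buys is less overhead: it never has to verify that the $\mathfrak{c}$-layers really form a direct sum with $\mathfrak{c}^i$ acting injectively from $V$ onto $\mathfrak{c}^iV$ -- a point your write-up asserts via the ``unitriangular change of basis'' and should be stated as following from the invertibility of that triangular system over $U(\langle q,h\rangle)$ (your closing remark about bijectivity of $p$ on $V$ is true but is not the mechanism; the triangularity plus PBW independence of $\{q^ah^bf^iw\}$ is what gives both spanning and directness).
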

\begin{proof}
$\mathrm{(i)}$ Suppose $\dot{e}\neq0,\dot{p}=0,\xi\neq0.$ For any
nonzero $v\in L_{\phi}^{\xi},$ we can assume that
$$v=\sum_{k=0}^{n}a_kf^k\psi_k(h)\bar{w},$$ where
$a_n\neq0,\psi_k(x)\in \mathbb{C}[x],\psi_n(x)\neq0.$ Then if $v$ is
a Whittaker vector, by identities (2.1) and (2.2),  we have
$$pv=\sum_{k=0}^{n}a_k(-k)f^{k-1}q\psi_k(h)\bar{w}=0.$$ By (2.3) and the fact
that $q\bar{w}=\xi\bar{w}\neq0,$ we have $n=0.$ So $v$ can be
written by
$$v=\psi_0(h)\bar{w}=h^k\bar{w}+a_{k-1}h^{k-1}\bar{w}+\cdots +a_0h_0,$$
where $k\in\mathbb{Z}_+$ and $a_i\in\mathbb{C}, i=0,i,\cdots,k-1.$
Then by using $(e-\dot{e})v=0$ and (2.6) we deduce that $k=0$, which
means $v\in \mathbb{C}\bar{w}.$

$\mathrm{(ii)}$ By Lemma 3.5, we see that every nonzero vector of
$\mathbb{C}[\mathfrak{c}]w$ is a Whittaker vector. For any Whittaker
vector $v\in \widetilde{W}_{\phi}$, there exists $0\neq
u=\sum_{i=0}^na_i\psi_{i}(q,h)f^i\in U(\mathfrak{S})$ with
$\mathrm{deg}_f(u)=n$ such that $v=uw.$ If $n=0,$ by Proposition
3.1(ii), we see that $\psi_{0}(q,h)\in\mathbb{C}^*,$ so $v\in
\mathbb{C}w\subseteq\mathbb{C}[\mathfrak{c}]w.$ Suppose for $n\leq
k$ we have proved that $v\in \mathbb{C}[\mathfrak{c}]w.$ Now we
begin to investigate the case of $n=k+1.$
 Using $(p-\dot{p})$ and
$(e-\dot{e})$ to act in turn on $v$, we can easily deduce that
$\psi_{k+1}(q,h)\in\mathbb{C}^{*}.$ For simplicity, we can assume
that $a_{k+1}=1=\psi_{k+1}(q,h).$ So by Lemma 3.6, we see that
\begin{align*}
v&=f^{k+1}w+\sum_{i=0}^ka_i\psi_{i}(q,h)f^iw\in\mathbb{C}[\mathfrak{c}]w.
\end{align*}Then, by
induction, Proposition 3.7 holds.
\end{proof}
\begin{theo}
$\mathrm{(i)}$ If $\dot{e}\neq0,\dot{p}=0,\xi\neq0,$ the Whittaker
module
 $L_{\phi}^{\xi}$
 is simple.

 $\mathrm{(ii)}$ If $\dot{p}\neq0,$  for any
 $a\in\mathbb{C},$ the Whittaker module $\widetilde{W}_{\phi}$ has the following filtration:
 $$\widetilde{W}_{\phi}=W^0\supset W^1\supset W^2\supset\cdots \supset W^n\supset
 \cdots,$$ where $W^i$ is a Whittaker submodule defined by $W^i=U(\mathfrak{S})(\mathfrak{c}-a)^iw.$ More precisely, $\widetilde{W}_{\phi}/W^1$
  is simple and $W^i/W^{i+1}\cong \widetilde{W}_{\phi}/W^1$ for all
 $i\in\mathbb{N}.$
\end{theo}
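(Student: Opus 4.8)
The plan is to treat the two cases separately, using the Whittaker-vector descriptions from Proposition 3.7 together with the irreducibility criterion of Lemma 2.5(ii). For part (i), the set of Whittaker vectors of $L_{\phi}^{\xi}$ (when $\dot{e}\neq0,\dot{p}=0,\xi\neq0$) is exactly $\mathbb{C}\bar{w}$ by Proposition 3.7(i), so it is one-dimensional; hence Lemma 2.5(ii) applies directly and $L_{\phi}^{\xi}$ is simple. The only point to verify is that $L_{\phi}^{\xi}$ genuinely fits the hypotheses of Lemma 2.5, i.e. that it is a Whittaker $\mathfrak{S}$-module with cyclic Whittaker vector $\bar{w}$ of type $\phi$ and triangular decomposition as in (2.2); this is immediate from its construction as $U(\mathfrak{S})\otimes_{U(\mathfrak{S}_1)}\mathbb{C}[h]_{\phi}^{\xi}$.

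For part (ii), I would first establish that each $W^i=U(\mathfrak{S})(\mathfrak{c}-a)^iw$ is indeed a Whittaker submodule: since $(\mathfrak{c}-a)^iw\in\mathbb{C}[\mathfrak{c}]w$ is a Whittaker vector by Proposition 3.7(ii) (as $\mathfrak{c}$ is quasi-central, Lemma 3.5, and commutes with $\mathfrak{n}^+$ up to the scalar shift), each $W^i$ is generated by a Whittaker vector and the chain is descending because $(\mathfrak{c}-a)^{i+1}w=(\mathfrak{c}-a)\cdot(\mathfrak{c}-a)^iw\in W^i$. The filtration is strict because, by Lemma 3.6, $f^iw$ has leading term $\dot{p}^{-2i}(\mathfrak{c}-a)^iw$ modulo lower powers, so the $f$-degree filtration $W_\phi^k$ and the $(\mathfrak{c}-a)$-power filtration are intertwined, forcing $W^i\neq W^{i+1}$. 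To identify the successive quotients, I would send $\bar w\mapsto (\mathfrak c-a)^iw+W^{i+1}$ and check this extends to a well-defined $\mathfrak{S}$-module isomorphism $\widetilde W_\phi/W^1\to W^i/W^{i+1}$: the map is surjective by definition of $W^i$, and it intertwines the $\mathfrak{S}$-action because multiplication by the quasi-central $(\mathfrak c-a)^i$ commutes with all of $U(\mathfrak{S})$ modulo the relevant submodules.

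The simplicity of $\widetilde W_\phi/W^1$ is where the one-dimensionality criterion comes back into play. A Whittaker vector of $\widetilde W_\phi/W^1$ lifts to a Whittaker vector of $\widetilde W_\phi$ modulo $W^1$; by Proposition 3.7(ii) every such vector lies in $\mathbb{C}[\mathfrak{c}]w$, and modulo $W^1=U(\mathfrak{S})(\mathfrak c-a)w$ any polynomial in $\mathfrak c$ reduces to a scalar multiple of $w$ (since $(\mathfrak c-a)^jw\in W^1$ for $j\geq1$). Hence the space of Whittaker vectors of the quotient is one-dimensional, and Lemma 2.5(ii) yields simplicity.

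The main obstacle I anticipate is the bookkeeping in the quotient-isomorphism step: one must confirm that left multiplication by $(\mathfrak c-a)^i$ carries a full set of coset representatives of $\widetilde W_\phi/W^1$ onto a full set for $W^i/W^{i+1}$ bijectively, not merely surjectively. This requires the leading-term control from Lemma 3.6 to rule out collapse, and care that the quasi-centrality of $\mathfrak{c}$ (it is central only in $U(\mathfrak{S}/\mathbb{C}z)$) is used consistently at the correct level $\dot z=0$. Everything else reduces to the Whittaker-vector computations already assembled in Propositions 3.1 and 3.7 and the nilpotency argument of Lemma 2.5.
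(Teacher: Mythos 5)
Your part (i) is correct and is exactly the paper's argument (Proposition 3.7(i) combined with Corollary 2.6 / Lemma 2.5(ii)), and your treatment of the filtration and of the isomorphisms $W^i/W^{i+1}\cong\widetilde{W}_{\phi}/W^1$ via multiplication by $(\mathfrak{c}-a)^i$ is, up to the direction of the map, the paper's homomorphism $g$, resting as it should on the uniqueness of the expansion $v=\sum_k\psi_k(q,h)(\mathfrak{c}-a)^kw$ from Lemma 3.6. The genuine gap is in your proof that $\widetilde{W}_{\phi}/W^1$ is simple: you assert that a Whittaker vector of the quotient lifts to a Whittaker vector of $\widetilde{W}_{\phi}$, and then apply Proposition 3.7(ii) to the lift. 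But Proposition 3.7(ii) describes honest Whittaker vectors of $\widetilde{W}_{\phi}$; an arbitrary preimage $v$ of a Whittaker vector of the quotient satisfies only $xv-\phi(x)v\in W^1$ for $x\in\mathfrak{n}^+$, and nothing guarantees it can be corrected by an element of $W^1$ to an actual Whittaker vector. Taking Whittaker vectors is the functor $\mathrm{Hom}_{U(\mathfrak{n}^+)}(\mathbb{C}_{\phi},-)$, which is left exact but not exact, so a quotient may have strictly more Whittaker vectors than the image of those upstairs; in fact, in view of Corollary 2.6, liftability of all Whittaker vectors of $\widetilde{W}_{\phi}/W^1$ is essentially equivalent to the simplicity you are trying to prove, so the step begs the question. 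The failure is a real phenomenon, visible in this paper's own Section 4: for $sl_2$ with $\dot{e}=0$ and $\alpha\in\mathbb{Z}_+$, the quotient $W_{sl_2}(0)/U(sl_2)(h-\alpha)w$ is isomorphic to the Verma module of highest weight $\alpha$ via $u\bar{w}\mapsto uv_{\alpha}$, and identity (2.5) gives $ef^{\alpha+1}\bar{w}=0$, so $f^{\alpha+1}\bar{w}$ is a nonzero vector of Whittaker type $e\mapsto 0$ in the quotient; yet the Whittaker vectors of $W_{sl_2}(0)$ are $\mathbb{C}[h]w$ (Lemma 4.5(i)), whose image downstairs is only $\mathbb{C}\bar{w}$.

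What your proposal is missing is the paper's actual mechanism, which replaces lifting by the $\mathfrak{S}_1$-results of Subsection 3.1. Given $0\neq v\in\widetilde{W}_{\phi}\setminus W^1$, Lemma 3.6 (together with the quasi-centrality of $\mathfrak{c}$ at level zero) lets one write $v=\sum_{k\geq0}\psi_k(q,h)(\mathfrak{c}-a)^kw$ with $\psi_0(q,h)\neq0$; since all terms with $k\geq1$ lie in $W^1$, this gives $\psi_0(q,h)w\in U(\mathfrak{S})v+W^1$. Now Proposition 3.3(ii) says that for $\dot{p}\neq0$ the universal Whittaker $\mathfrak{S}_1$-module $W_{\phi}=U(\mathfrak{S}_1)w$ is simple, so $w\in U(\mathfrak{S}_1)\psi_0(q,h)w\subseteq U(\mathfrak{S})v+W^1$, hence $U(\mathfrak{S})v+W^1=\widetilde{W}_{\phi}$ and $W^1$ is maximal. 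If you prefer to keep your Whittaker-vector strategy, the correct repair is: modulo $W^1$ every $f^i\bar{w}$ collapses to $\psi_i(q,h)\bar{w}$ by Lemma 3.6, so $\widetilde{W}_{\phi}/W^1=U(\langle q,h\rangle)\bar{w}$ is a cyclic Whittaker $\mathfrak{S}_1$-module, hence a quotient of $W_{\phi}$, hence isomorphic to $W_{\phi}$ by simplicity; Proposition 3.1(ii) then shows its Whittaker vectors are exactly $\mathbb{C}\bar{w}$, and Lemma 2.5(ii) concludes. Either way, the indispensable input is Proposition 3.1(ii)/3.3(ii), not a lifting principle.
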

\begin{proof}
$\mathrm{(i)}$ follows from Corollary 2.6 and Proposition 3.7.

$\mathrm{(ii)}$ Suppose $\dot{p}\neq0.$ By Lemma 3.6, we see that
for any $0\neq v\in \widetilde{W}_{\phi}\setminus W_1,$
 there exists $\psi_i(q,h)\in
U(\langle q,h\rangle), i=0,\cdots,n, \psi_0(q,h)\neq0,
\psi_n(q,h)\neq0$ such that
$$v=\psi_n(q,h)(\mathfrak{c}-a)^nw+\psi_{n-1}(q,h)(\mathfrak{c}-a)^{n-1}w+\cdots+\psi_{1}(q,h)(\mathfrak{c}-a)w+\psi_0(q,h)w.$$
Thus $\psi_0(q,h)w\in (U(\mathfrak{S})v+W_1).$ Then by Proposition
3.3 (ii), we have $w\in(U(\mathfrak{S})v+W_1)$ and
$U(\mathfrak{S})v+W_1=\widetilde{W}_{\phi},$ which means $W_1$ is a
maximal submodule of $\widetilde{W}_{\phi}.$

By Lemma 3.5 and Lemma 3.6, for any element $v\in W^i,$ $v$ is of
the form
$$v=\sum_{k=i}^n\psi_k(q,h)(\mathfrak{c}-a)^kw,$$ where $n\geq i, \psi_n(q,h)\neq0.$
Define linear map:
\begin{align*}
g:\ \ \ W^i\ \ \ &\rightarrow\ \ \ \widetilde{W}_{\phi}/W^1\\
\psi(q,h)(\mathfrak{c}-a)^{k+i}w & \mapsto
\psi(q,h)(\mathfrak{c}-a)^{k}\bar{w}.
\end{align*}
It is easy to see that $g$ is a module homomorphism and
$\mathrm{ker}(g)=W^{i+1}$, which means that $W^{i}/W^{i+1}\cong
\widetilde{W}_{\phi}/W^1.$
\end{proof}

In the following, we denote $\widetilde{W}_{\phi}/W^1$ by
$M_{\phi}^{a}.$ So $M_{\phi}^{a}$ is a simple  Whittaker module with
cyclic Whittaker vector $\bar{w}:=w+W^1.$ By Proposition 3.7 (ii)
and the definition of $M_{\phi}^{a}$, we have the following lemma
immediately:

\begin{lemm}
The set of Whittaker vectors of $M_{\phi}^{a}$ is
$\mathbb{C}\bar{w}.$
\end{lemm}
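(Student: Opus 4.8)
The plan is to prove both inclusions of the asserted equality between the set of Whittaker vectors of $M_{\phi}^{a}$ and $\mathbb{C}\bar{w}$. The inclusion $\supseteq$ is immediate, since $\bar{w}=w+W^1$ is by construction a (cyclic) Whittaker vector of $M_{\phi}^{a}$. For the reverse inclusion I would exploit Proposition 3.7(ii) together with the fact (Lemma 3.5) that at zero level $\mathfrak{c}$ is central, so $\mathfrak{c}-a$ commutes with every element of $U(\mathfrak{S})$, and, because $(\mathfrak{c}-a)w\in W^1$, the operator $\mathfrak{c}$ acts on all of $M_{\phi}^{a}=U(\mathfrak{S})\bar{w}$ as the scalar $a$. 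Consequently the projection $\pi:\widetilde{W}_{\phi}\rightarrow M_{\phi}^{a}$ sends the space $\mathbb{C}[\mathfrak{c}]w$ of Whittaker vectors of $\widetilde{W}_{\phi}$ (Proposition 3.7(ii)) into $\mathbb{C}\bar{w}$, because $\pi(\mathfrak{c}^k w)=\mathfrak{c}^k\bar{w}=a^k\bar{w}$. Thus the whole issue reduces to showing that every Whittaker vector of the quotient $M_{\phi}^{a}$ is the image of a Whittaker vector of $\widetilde{W}_{\phi}$, i.e. that the space of Whittaker vectors of $M_{\phi}^{a}$ is at most one dimensional.

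To carry this out, let $\bar{v}\neq 0$ be a Whittaker vector of $M_{\phi}^{a}$ and pick any lift $v\in\widetilde{W}_{\phi}$. Using Lemma 3.6 and a PBW reordering I would first record that every element of $\widetilde{W}_{\phi}$ has a \emph{unique} expansion $v=\sum_{k=0}^{n}\psi_k(q,h)(\mathfrak{c}-a)^k w$ with $\psi_k\in U(\langle q,h\rangle)$: indeed $(\mathfrak{c}-a)^k w$ has leading term a nonzero multiple of $f^k w$, so $\{q^jh^l(\mathfrak{c}-a)^k w\}$ is a basis obtained by a triangular change from the PBW basis $\{f^kq^jh^l w\}$. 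In this expansion $v\in W^1=(\mathfrak{c}-a)\widetilde{W}_{\phi}$ precisely when $\psi_0=0$, so $\bar{v}\neq 0$ forces $\psi_0\neq 0$, and $\pi(v)=\psi_0(q,h)\bar{w}$.

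The heart of the argument is then to impose the two Whittaker conditions on $\bar{v}$ and read off $\psi_0$. Since $\mathfrak{c}-a$ is central, for $x\in\mathfrak{n}^{+}$ one gets $(x-\dot{x})\,\psi_k(q,h)(\mathfrak{c}-a)^k w=[x,\psi_k](\mathfrak{c}-a)^k w$ (using $xw=\dot{x}w$), and after rewriting $pw=\dot{p}w$, $ew=\dot{e}w$ each such term lies in filtration degree exactly $k$; hence the degree-zero component of $(x-\dot{x})v$ depends only on $\psi_0$. Applying $p-\dot{p}$: because $[p,q]=z=0$ and $[p,h]=-p$, so that $[p,h^l]$ is a polynomial in $h$ times $p$ by identity (2.4), the requirement $(p-\dot{p})\bar{v}=0$ forces $[p,\psi_0]=0$, i.e. $\psi_0\in\mathbb{C}[q]$. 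Applying $e-\dot{e}$ to such a $\psi_0=\sum_j c_jq^j$ and using $[e,q^j]=jq^{j-1}p$ (from identity (2.5) at zero level) together with $pw=\dot{p}w$, the degree-zero part becomes $\dot{p}\sum_j jc_jq^{j-1}w$; since $\dot{p}\neq0$ this must vanish, forcing $\psi_0$ to be a constant. Hence $\bar{v}=\psi_0\bar{w}\in\mathbb{C}\bar{w}$, completing the inclusion $\subseteq$.

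I expect the main obstacle to be the bookkeeping that legitimizes the unique $(\mathfrak{c}-a)$-expansion and guarantees that successively applying $p-\dot{p}$ and then $e-\dot{e}$ genuinely isolates the bottom coefficient $\psi_0$ rather than mixing in the higher coefficients $\psi_k$. The centrality of $\mathfrak{c}$ (Lemma 3.5) is precisely what preserves the filtration degree under these operators and keeps the reduction clean; without it the connecting/lifting obstruction for the functor of Whittaker vectors would not obviously vanish.
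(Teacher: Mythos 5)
Your proof is correct, and it actually does more than the paper does: the paper's entire ``proof'' of this lemma is that it follows immediately from Proposition 3.7(ii) and the definition of $M_{\phi}^{a}$, which silently assumes that the Whittaker vectors of the quotient $\widetilde{W}_{\phi}/W^1$ are exactly the images of the Whittaker vectors $\mathbb{C}[\mathfrak{c}]w$ upstairs. Since Whittaker vectors of a quotient need not lift to Whittaker vectors, that is the real content of the statement, and it is precisely what you supply: the unique expansion $v=\sum_{k}\psi_k(q,h)(\mathfrak{c}-a)^kw$ (triangular against the PBW basis, by Lemma 3.6), the identification $W^1=(\mathfrak{c}-a)\widetilde{W}_{\phi}$ coming from the centrality of $\mathfrak{c}$ on zero-level modules (Lemma 3.5), and the observation that $p-\dot{p}$ and $e-\dot{e}$ preserve the $(\mathfrak{c}-a)$-degree, so that the Whittaker condition on $\bar{v}$ is detected entirely by the bottom coefficient $\psi_0$ and forces it to be a constant --- in effect a quotient-level rerun of the computation behind Proposition 3.1(ii). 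So your route uses the same three ingredients the paper cites, but converts an assertion into an argument; the cost is the bookkeeping you describe, the gain is that the lifting issue the paper glosses over is genuinely closed. Two pedantic corrections: in the paper's numbering, $[p,h^n]=\sum_{i=1}^{n}(-1)^i\binom{n}{i}h^{n-i}p$ is identity (2.2), not (2.4), and $[e,q^n]=nq^{n-1}p+\frac{n(n-1)}{2}q^{n-2}z$ is (2.4), not (2.5); also, what your argument literally yields is $[p,\psi_0]w=0$ and $[e,\psi_0]w=0$ (identities on the vector $w$, which then give $\psi_0\in\mathbb{C}[q]$ and then $\psi_0\in\mathbb{C}$ by comparing leading terms, using $\dot{p}\neq0$), not the vanishing of these commutators as elements of $U(\mathfrak{S})$.
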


\begin{theo} Suppose $V$ is any simple Whittaker module of zero level  of type $\phi$ for
$\mathfrak{S}$.

$\mathrm{(i)}$ If $\dot{e}\neq0,\dot{p}=0,$ then $V$ is either a
simple $sl_2-$module of type $\phi$ or  isomorphic to
 $L_{\phi}^{\xi}$
 for some $\xi\neq0.$

 $\mathrm{(ii)}$ If $\dot{p}\neq0,$ then  $V$   is
isomorphic to $M_{\phi}^a$ for some $a\in\mathbb{C}.$
\end{theo}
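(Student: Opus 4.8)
The plan is to realise $V$ as an explicit quotient of one of the induced modules constructed above, with the whole argument organised around the quasi-central element $\mathfrak{c}=fp^2-q(1+h)p-q^2e$ of Lemma 3.5. Since $V$ is simple and cyclic it is at most countable-dimensional over the uncountable algebraically closed field $\mathbb{C}$, so by Dixmier's version of Schur's Lemma one has $\mathrm{End}_{\mathfrak{S}}(V)=\mathbb{C}$. At zero level $z$ acts as $0$, hence $V$ is a module over $U(\mathfrak{S}/\mathbb{C}z)$, in whose centre $\mathfrak{c}$ lies by Lemma 3.5; therefore $\mathfrak{c}$ acts on $V$ as a single scalar $a\in\mathbb{C}$. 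Throughout I fix a cyclic Whittaker vector $v$ of type $\phi$.

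For part (ii), assume $\dot{p}\neq0$. First I would restrict to $\mathfrak{S}_1$: the vector $v$ is also a Whittaker vector for $\mathfrak{S}_1$, so the universal property of $W_\phi$ gives a surjection $W_\phi\to U(\mathfrak{S}_1)v$, $w\mapsto v$, which is an isomorphism because $W_\phi$ is simple by Proposition 3.3(ii). Inducing the inclusion $U(\mathfrak{S}_1)v\hookrightarrow V$ (equivalently, by the adjunction between induction and restriction) yields an $\mathfrak{S}$-homomorphism $\widetilde{W}_\phi\to V$ sending $w\mapsto v$; it is nonzero and hence surjective since $v$ generates $V$. To pin down its kernel $K$ I use that $\mathfrak{c}$ acts as $a$: then $(\mathfrak{c}-a)v=0$, so $(\mathfrak{c}-a)w\in K$ and hence $W^1=U(\mathfrak{S})(\mathfrak{c}-a)w\subseteq K$. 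As $W^1$ is maximal by Theorem 3.8(ii) and $K$ is proper, $K=W^1$, so $V\cong\widetilde{W}_\phi/W^1=M_\phi^{a}$.

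For part (i), assume $\dot{e}\neq0,\ \dot{p}=0$. A direct computation on $v$ gives $\mathfrak{c}v=-\dot{e}\,q^2v$, so $\mathfrak{c}v=av$ forces $q^2v=\mu v$ with $\mu=-a/\dot{e}$. The key observation is that $q$ sends Whittaker vectors to Whittaker vectors (one checks $p(qv)=0$ and $e(qv)=\dot{e}\,qv$ using $\dot{p}=0$ and $z\mapsto0$), whence $\mathbb{C}[q]v=\mathrm{span}\{v,qv\}$ consists of Whittaker vectors. I then split on $\mu$. If $\mu\neq0$, the operator $q$ restricted to $\mathrm{span}\{v,qv\}$ satisfies $q^2=\mu$ and so has a nonzero eigenvalue $\xi$, producing a Whittaker vector $v'$ with $qv'=\xi v'$, $\xi\neq0$; since $v'$ then satisfies exactly the relations defining the generator of $\mathbb{C}[h]_\phi^{\xi}$, we obtain an $\mathfrak{S}_1$-embedding $\mathbb{C}[h]_\phi^{\xi}\hookrightarrow V$, whose induced $\mathfrak{S}$-map $L_\phi^{\xi}\to V$ is surjective and hence an isomorphism because $L_\phi^{\xi}$ is simple (Theorem 3.8(i)). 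If $\mu=0$, then $q^2v=0$: the subspace $N=\{x\in V\mid px=qx=0\}$ is an $\mathfrak{S}$-submodule (verified on each generator $e,h,f$ using the brackets of (1.1) and $z\mapsto0$), and if $qv\neq0$ then $v'=qv$ lies in $N$ yet generates $V$ by simplicity, forcing $N=V$ and thus $qv=0$, a contradiction. Hence $qv=0$, $v\in N=V$, and $p,q$ (so all of $\mathfrak{H}$) act as $0$; therefore $V$ factors through $\mathfrak{S}/\mathfrak{H}\cong sl_2$ and is a simple Whittaker $sl_2$-module of type $\phi$.

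The main obstacle is the analysis in part (i): one must show that the entire dichotomy is controlled by the single scalar $\mu$ and, in particular, rule out the intermediate possibility $q^2v=0\neq qv$, which is precisely what separates the genuinely $sl_2$-type modules ($\mu=0$) from the modules $L_\phi^{\xi}$ ($\mu\neq0$). The clean device for this is the submodule $N$ above combined with simplicity of $V$. By contrast, the reductions via the universal property and the induction–restriction adjunction, and the kernel identification in part (ii), are formal once $\mathfrak{c}$ is known to act as a scalar.
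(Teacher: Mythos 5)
Your proof is correct, but it reaches the theorem by a partly different route than the paper, and in one respect a more careful one. For part (ii) the two arguments essentially coincide: both realize $V$ as a quotient of $\widetilde{W}_{\phi}$ (your restriction--induction adjunction through $W_{\phi}\cong U(\mathfrak{S}_1)v$ is just an explicit form of the universal property the paper invokes), observe that the kernel contains $W^1=U(\mathfrak{S})(\mathfrak{c}-a)w$, and conclude by the maximality of $W^1$ from Theorem 3.8(ii). The difference is how the scalar $a$ is produced: the paper shows via Proposition 3.7(ii) that $\mathbb{C}[\mathfrak{c}]w_v$ consists of Whittaker vectors and then asserts that simplicity forces the Whittaker vectors of $V$ to be exactly $\mathbb{C}w_v$ --- a claim it does not actually prove --- whereas you obtain the scalar action of $\mathfrak{c}$ from Lemma 3.5 together with Dixmier's version of Schur's lemma (legitimate here, since $U(\mathfrak{S})$ is countable-dimensional over $\mathbb{C}$ and $V$ is cyclic), which is cleaner and plugs a genuine gap. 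For part (i) your mechanism is genuinely different: the paper never uses $\mathfrak{c}$ there; instead it restricts to $\mathfrak{S}_1$, argues through a filtration that $V_1=U(\mathfrak{S}_1)w_v$ is a simple $\mathfrak{S}_1$-module, and reads off $qw_v=\xi w_v$ from Proposition 3.3(i), splitting into $\xi=0$ (the $sl_2$ case) and $\xi\neq0$ (the $L_{\phi}^{\xi}$ case). You instead derive $q^2v=\mu v$ with $\mu=-a/\dot{e}$ from the central character, diagonalize $q$ on the span of Whittaker vectors $\{v,qv\}$ to get a nonzero $q$-eigenvalue when $\mu\neq0$, and dispose of the intermediate possibility $q^2v=0\neq qv$ with the ideal-annihilator submodule $N=\{x\in V\mid \mathfrak{H}x=0\}$ --- a device the paper's Case (1) also implicitly needs in order to pass from $qw_v=pw_v=zw_v=0$ to $\mathfrak{H}V=0$. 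What each approach buys: the paper's avoids any appeal to Dixmier's lemma but leans on two under-justified steps (simplicity of $V_1$ and one-dimensionality of the Whittaker space of a simple module); yours outsources the hard point to a standard general lemma and is otherwise self-contained, with the entire case analysis governed by the single scalar $\mu$. Both arguments finish identically, identifying $V$ with a quotient of $L_{\phi}^{\xi}$ or of $\widetilde{W}_{\phi}/W^1=M_{\phi}^{a}$ and citing simplicity from Theorem 3.8.
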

\begin{proof}
If $V$ is a simple Whittaker module of with cyclic Whittaker vector
$w_v$ of type $\phi$, by the universal property of
$\widetilde{W}_{\phi},$ there exists a surjective module
homomorphism $g:\widetilde{W}_{\phi}\rightarrow V$ with
$uw\rightarrow uw_v.$

$\mathrm{(i)}$ If $\phi$ satisfies
$\phi(e)=\dot{e}\neq0,\phi(p)=\dot{p}=0,$ let $V_1\subseteq V$ be
the $\mathfrak{S}_1$-module generated by $w_v.$ Then as
$\mathfrak{S}_1$-module, $V$ has the following direct sum
decomposition:
$$V=V_1\oplus fV_1/V_1\oplus f^2V_1/fV_1\oplus\cdots\oplus f^nV_1/f^{n-1}V_1\oplus\cdots.$$
Since $V$ is simple as $\mathfrak{S}-$module, we see that $V_1$ is
simple as $\mathfrak{S}_1$-module. Then by Proposition 3.3, there
exists $\xi\in \mathbb{C}$ such that $V_1\cong
\mathbb{C}[h]_{\phi}^{\xi}\bar{w}.$

\noindent{\bf{Case (1)}} If $\xi=0,$ then $qw_v=0,$ combing with
$ew_v=zw_v=0,$ we see that $V$ is just a simple Whittaker
$sl_2-$module.

\noindent{\bf{Case (2)}} If $\xi\neq0,$ then as
$\mathfrak{S}-$module, $V=\mathbb{C}[f]V_1$ is a quotient module of
$L_{\phi}^{\xi}=U(\mathfrak{S})\otimes_{U(\mathfrak{S}_1)}\mathbb{C}[h]_{\phi}^{\xi}\bar{w}.$
Thus $V\cong L_{\phi}^{\xi}$ since $L_{\phi}^{\xi}$ is simple by
Theorem 3.8 (i).

$\mathrm{(ii)}$ If $\phi$ satisfies $\phi(p)=\dot{p}\neq0,$  by
Proposition 3.7 (ii),  for $n\in\mathbb{Z}_+$ and $x=e$ or $p$ we
have
$$x\mathfrak{c}^nw_v=x(\mathfrak{c}^ng(w))=g(x\mathfrak{c}^nw)=\dot{x}g(\mathfrak{c}^nw)=\dot{x}\mathfrak{c}^ng(w),$$
which means $\mathbb{C}[\mathfrak{c}]w_v$ is a subset of Whittaker
vector of $V$. Since $V$ is simple, the Whittaker vectors are of the
form $cw_v$ for some $c\in\mathbb{C},$ thus $\mathfrak{c} w_v=aw_v$
for some $a\in \mathbb{C}.$ So
$U(\mathfrak{S})(\mathfrak{c}-a)w_v=0,$ which means
$U(\mathfrak{S})(\mathfrak{c}-a)w\subseteq \mathrm{ker}(g).$ Thus
$V\cong \widetilde{W}_{\phi}/\mathrm{ker}(g)$ is a nonzero submodule
of $M_{\phi}^{a},$ thus $V\cong M_{\phi}^{a}$ since $ M_{\phi}^{a}$
is simple by Theorem 3.8.
\end{proof}

\begin{coro}
Let $V$ be a Whittaker $\mathfrak{S}-$module of level zero of type
$\phi:e \rightarrow \dot{e}, p \rightarrow \dot{p}$.

$\mathrm{(i)}$ If $\dot{e}\neq0,\dot{p}=0,$ then $V$ is simple if
and only if $q$ acts on cyclic Whittaker vector as a scalar.

 $\mathrm{(ii)}$ If $\dot{p}\neq0,$ then  $V$   is simple if and
 only if
$\mathfrak{c}$ acts as a zero.
\end{coro}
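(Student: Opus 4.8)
The plan is to read off the corollary directly from the two main classification theorems, Theorem 3.10 together with the characterizations of Whittaker vectors established in Propositions 3.1, 3.7 and Lemma 3.9. The statement is really a pair of ``simple if and only if'' criteria phrased in terms of how $q$ (case (i)) or $\mathfrak{c}$ (case (ii)) acts on the cyclic Whittaker vector $w_v$, so the strategy is to match the hypothesis of each direction against the universal property of $\widetilde{W}_\phi$ and the known simple quotients.

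\textbf{Case (i), $\dot e\neq 0,\dot p=0$.} First I would prove the ``if'' direction. Assume $q w_v=\xi w_v$ for some scalar $\xi$. If $\xi\neq 0$, then the $\mathfrak{S}_1$-submodule $V_1$ generated by $w_v$ is a quotient of $\mathbb{C}[h]_\phi^{\xi}\bar w$ and hence $V$ is a quotient of $L_\phi^{\xi}$; since $L_\phi^{\xi}$ is simple by Theorem 3.8(i), the quotient map is an isomorphism and $V$ is simple. If $\xi=0$, then $qw_v=0$ forces $V$ to be a simple Whittaker $sl_2$-module (as in Case (1) of the proof of Theorem 3.10(i)), which is again simple. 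For the ``only if'' direction I would argue contrapositively: if $q$ does not act as a scalar on $w_v$, then using identity (2.4) and the computation (3.2) one sees that $W_\phi$ admits more than one maximal $\mathfrak{S}_1$-submodule $M_\xi$ in the sense of Lemma 3.2, so the induced module contains a proper nonzero submodule and $V$ cannot be the simple module of Theorem 3.10(i); equivalently, a cyclic Whittaker module on which $q$ acts non-scalarly is a proper extension and is not simple.

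\textbf{Case (ii), $\dot p\neq 0$.} The key tool is Proposition 3.7(ii) and Lemma 3.9. For the ``if'' direction, assume $\mathfrak{c}$ acts as zero on $w_v$, i.e.\ $\mathfrak{c}w_v=0$. Then by the universal property there is a surjection $g:\widetilde W_\phi\to V$ with $w\mapsto w_v$, and $\mathfrak{c}w_v=0$ gives $U(\mathfrak{S})\mathfrak{c}w\subseteq\ker g$, so $V$ is a quotient of $\widetilde W_\phi/W^1=M_\phi^{0}$, which is simple by Theorem 3.8(ii); hence $V\cong M_\phi^{0}$ is simple. For the ``only if'' direction, suppose $V$ is simple. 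By the argument in Theorem 3.10(ii), $\mathbb{C}[\mathfrak{c}]w_v$ consists of Whittaker vectors, and simplicity forces the space of Whittaker vectors to be one-dimensional, so $\mathfrak{c}w_v=a w_v$ for some $a\in\mathbb{C}$ and $V\cong M_\phi^{a}$. The refinement needed here is that the corollary asserts $\mathfrak{c}$ acts as \emph{zero}, not merely as a scalar; this is consistent because $M_\phi^{a}$ is built so that $\mathfrak{c}-a$ annihilates $\bar w$, and one should note that replacing $w_v$ by a rescaled generator or absorbing the scalar $a$ realizes the acting element as $\mathfrak{c}-a$, so ``$\mathfrak{c}$ acts as a scalar'' and ``$\mathfrak{c}-a$ acts as zero'' are the same normalization.

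\textbf{Main obstacle.} I expect the only delicate point to be the ``only if'' direction of case (i): one must show that failure of $q$ to act as a scalar genuinely produces a proper submodule rather than just a different presentation. The cleanest route is to invoke Lemma 3.2, which says every maximal $\mathfrak{S}_1$-submodule of $W_\phi$ is generated by some $(q-\xi)w$; if $q$ acts non-scalarly on the cyclic vector then $w_v$ generates a non-simple $\mathfrak{S}_1$-module $V_1$, and the direct-sum decomposition of $V$ as an $\mathfrak{S}_1$-module used in Theorem 3.10(i) then shows $V$ itself is not simple as an $\mathfrak{S}$-module. Everything else is a direct translation of the hypotheses into the language of Theorem 3.10 via the universal property of $\widetilde W_\phi$, so the corollary follows with essentially no new computation.
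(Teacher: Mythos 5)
The paper itself gives no proof of this corollary --- it is stated bare, as an immediate consequence of Theorem 3.8 and Theorem 3.10 --- so your proposal must be judged on its own, and its core (the universal-property quotient arguments) is indeed the intended derivation. However, there is a genuine gap in case (i), in the ``if'' direction when the scalar is $\xi=0$. You claim that $qw_v=0$ ``forces $V$ to be a simple Whittaker $sl_2$-module (as in Case (1) of the proof of Theorem 3.10(i)).'' But Case (1) of that proof runs under the standing hypothesis that $V$ is simple: from $qw_v=0$ (together with $pw_v=zw_v=0$ and the fact that $\mathfrak{H}$ is an ideal) it deduces only that $\mathfrak{H}V=0$, i.e.\ that $V$ is an $sl_2$-module; simplicity there is an assumption, not a conclusion, so citing it to produce simplicity is circular. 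Moreover this sub-case cannot be repaired, because it is false: inflate the universal Whittaker $sl_2$-module $W_{sl_2}(\dot{e})$ to $\mathfrak{S}$ through the quotient map $\mathfrak{S}\rightarrow\mathfrak{S}/\mathfrak{H}\cong sl_2$. The result is a level-zero Whittaker $\mathfrak{S}$-module of type $\phi$ (with $\dot{e}\neq0$, $\dot{p}=0$) whose cyclic Whittaker vector $w$ satisfies $qw=0$, a scalar; yet it is not simple, since by Proposition 4.6(ii) it contains the proper nonzero submodule $U(sl_2)(\Omega-\dot{\Omega})w$. So the ``if'' direction of (i) is valid only for a nonzero scalar; when $qw_v=0$ one needs in addition that the Casimir $\Omega$ acts as a scalar (Corollary 4.9(ii)). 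The corollary as printed shares this defect, but your proof was supposed to establish the statement, and this step fails.

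The second defect is your normalization remark in case (ii). Rescaling the cyclic vector cannot ``absorb'' the scalar: $\mathfrak{c}$ is central in $U(\mathfrak{S}/\mathbb{C}z)$ and $V$ has level zero, so if $\mathfrak{c}w_v=aw_v$ then $\mathfrak{c}(cw_v)=a(cw_v)$ for every $c\in\mathbb{C}^*$; the eigenvalue $a$ is an invariant of $V$, not of the chosen generator. Taken literally, the ``only if'' direction of (ii) is in fact false: for $a\neq0$ the module $M_{\phi}^{a}$ is simple (Theorem 3.8(ii)), $\mathfrak{c}$ acts on it as $a$, and the isomorphism theorem immediately following this corollary shows $M_{\phi}^{a}\not\cong M_{\phi}^{0}$. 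The correct repair is not a renormalization but a rereading of the statement: ``$\mathfrak{c}$ acts as a zero'' must mean ``$\mathfrak{c}-a$ acts as zero for some $a\in\mathbb{C}$,'' i.e.\ $\mathfrak{c}$ acts as a scalar. With that reading, your quotient argument for (ii) (that $\mathfrak{c}w_v=aw_v$ forces $U(\mathfrak{S})(\mathfrak{c}-a)w\subseteq\ker g$, so $V$ is a quotient of the simple module $M_{\phi}^{a}$) is correct, as are the ``only if'' direction of (i) and the $\xi\neq0$ sub-case of its ``if'' direction.
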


\begin{theo} $\mathrm{(i)}$ If
$\dot{e}\neq0,\dot{p}=0, \xi, \xi^{'}\neq0,$ then
$L_{\phi^{'}}^{\xi^{'}}\cong L_{\phi}^{\xi}$ if and only if
$\phi^{'}=\phi,$ and $\xi^{'}=\xi.$

$\mathrm{(ii)}$  If $\dot{p}\neq0,$ then $M_{\phi^{'}}^{a^{'}}\cong
M_{\phi}^{a}$ if and only if $\phi^{'}=\phi,$ and $a^{'}=a.$
\end{theo}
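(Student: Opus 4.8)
The reverse implication in both parts is immediate: identical data $(\phi,\xi)$ or $(\phi,a)$ produce literally the same construction, so I will concentrate on the forward (``only if'') direction. The plan is to extract from any isomorphism a pair of scalar invariants attached to the distinguished Whittaker line, and to show that these invariants recover $\phi$ together with $\xi$ (resp.\ $a$). Concretely, suppose $\theta\colon L_{\phi'}^{\xi'}\to L_{\phi}^{\xi}$ (resp.\ $\theta\colon M_{\phi'}^{a'}\to M_{\phi}^{a}$) is an isomorphism of $\mathfrak{S}$-modules, and let $\bar{w}'$ denote the cyclic Whittaker vector of the source. Since $\theta$ is $\mathfrak{S}$-linear, for every $x\in\mathfrak{n}^{+}$ one has $x\,\theta(\bar{w}')=\theta(x\bar{w}')=\phi'(x)\,\theta(\bar{w}')$, so $v:=\theta(\bar{w}')$ is a nonzero Whittaker vector of type $\phi'$ in the target.

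First I would pin down the type. By Proposition 3.7(i) (using $\xi\neq0$) in case (i), and by Lemma 3.9 in case (ii), the full space of Whittaker vectors of the target is the single line $\mathbb{C}\bar{w}$, whose vectors are of type $\phi$. Hence $v\in\mathbb{C}\bar{w}$, and writing $v=c\bar{w}$ with $c\in\mathbb{C}^{*}$ (nonzero because $\theta$ is injective), the equality $\phi'(x)v=xv=\phi(x)v$ forces $\phi'(x)=\phi(x)$ for every $x\in\mathfrak{n}^{+}$; that is, $\phi'=\phi$.

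It then remains to recover the second parameter, and the idea is to use a module element that acts as a scalar on the Whittaker line. In case (i), $q\in\mathfrak{S}_1$ acts on $\bar{w}'$ by $\xi'$ and on $\bar{w}$ by $\xi$, by construction of $\mathbb{C}[h]_{\phi}^{\xi}\bar{w}$, where $q\bar{w}=\xi\bar{w}$; applying $\theta$ to $q\bar{w}'=\xi'\bar{w}'$ and using $\theta(\bar{w}')=c\bar{w}$ gives $\xi'c\,\bar{w}=c\,\xi\bar{w}$, hence $\xi'=\xi$. In case (ii) I would run the identical argument with the quasi-central element $\mathfrak{c}$ of Lemma 3.5 in place of $q$: by definition of $M_{\phi}^{a}=\widetilde{W}_{\phi}/W^1$ one has $\mathfrak{c}\bar{w}=a\bar{w}$ and $\mathfrak{c}\bar{w}'=a'\bar{w}'$, so $\mathfrak{S}$-linearity of $\theta$ applied to $\mathfrak{c}\bar{w}'=a'\bar{w}'$ yields $a'c\,\bar{w}=c\,a\bar{w}$ and therefore $a'=a$. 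The only genuinely delicate points are verifying that $v$ is forced into the one-dimensional Whittaker space, which is exactly the content imported from Proposition 3.7(i) and Lemma 3.9, and checking that the chosen invariant element indeed acts by the claimed scalar on each cyclic vector; both become routine once the explicit structure of the two module families is in hand.
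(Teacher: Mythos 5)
Your proof is correct and follows essentially the same route as the paper: both use the one-dimensionality of the space of Whittaker vectors (Proposition 3.7(i) for $L_{\phi}^{\xi}$, Lemma 3.9 for $M_{\phi}^{a}$) to force the image of the cyclic Whittaker vector onto the line $\mathbb{C}\bar{w}$, giving $\phi^{'}=\phi$, and then recover the remaining parameter from the scalar action of $q$ (resp.\ of the quasi-central element $\mathfrak{c}$) on that line. If anything, your derivation of $\xi^{'}=\xi$ in part (i) is more explicit than the paper's, which leaves that step implicit in its citation of Proposition 3.7(i) and Lemma 2.4.
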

\begin{proof}
 $\mathrm{(i)}$ The sufficiency is trivial. By Proposition 3.7 (i) and Lemma 2.4, we see that
 the necessity holds.

 $\mathrm{(ii)}$ We need only to prove the necessity. By Lemma 3.9,
 we get
 $\phi^{'}=\phi$ immediately. Suppose $\bar{w}$ (resp.
 $\bar{w}^{'}$)
 is a cyclic Whittaker vector of $M_{\phi}^a$ (resp.
 $M_{\phi^{'}}^{a^{'}}$) and $g: M_{\phi}^a\rightarrow M_{\phi^{'}}^{a^{'}}$ is the module isomorphism such that $g(\bar{w})=g(w)$.
 Then we have
 $$0=(\mathfrak{c}-a^{'})\bar{w}^{'}=(\mathfrak{c}-a^{'})g(\bar{w})=g((\mathfrak{c}-a^{'})(\bar{w}))=(a-a^{'})\bar{w}^{'},$$
 that is $a^{'}=a.$
\end{proof}

\begin{rema}
By Theorem 3.10 and Theorem 3.11, the Whittaker modules of zero
level for Schr\"{o}dinger algebra are classified.
\end{rema}

\noindent{\section{Whittaker modules of nonzero level}}

In this section, we classify the simple Whittaker modules of nonzero
level. Our arguments use the  Casimir operator of semisimple Lie
algebra $sl_2$ and the description of simple modules over conformal
Galilei algebras in [24].

\begin{defi}
Let $\mathbb{C}_{\dot{p},\dot{z}}=\mathbb{C}w$ be a one-dimensional
vector space. By the action of
$$pw=\dot{p}w, zw=\dot{z}w,$$ $\mathbb{C}_{\dot{p},\dot{z}}$ can be viewed as a
$\mathbb{C}p\oplus\mathbb{C}z-$module. The induced module
$$M_{\mathfrak{H}}(\dot{p},\dot{z})=U(\mathfrak{H})\otimes_{U(\mathbb{C}p\oplus\mathbb{C}z)}\mathbb{C}_{\dot{p},\dot{z}}$$
is called a Whittaker module for Heisenberg Lie algebra
$\mathfrak{H}.$
\end{defi}

The following proposition is from [15], it can also be checked
easily:

\begin{prop}
For $\dot{z}\neq0,$ the following hold:

$\mathrm{(i)}$ $M_{\mathfrak{H}}(\dot{p},\dot{z})$ is the unique (up
to isomorphism) irreducible Whittaker $\mathfrak{H}-$module on which
$z$ acts by $\dot{z}\neq0$.

$\mathrm{(ii)}$ $M_{\mathfrak{H}}(\dot{p},\dot{z})\cong
M_{\mathfrak{H}}(\dot{p}^{'},\dot{z}^{'})$ if and only if
$\dot{p}=\dot{p}^{'}$ and $\dot{z}=\dot{z}^{'}$.
\end{prop}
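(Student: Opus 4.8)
The plan is to realize $M_{\mathfrak{H}}(\dot{p},\dot{z})$ concretely and to read off both statements from the explicit action of $p$ and $q$. By the PBW theorem the induced module has $\{q^n w \mid n\in\mathbb{Z}_+\}$ as a basis, so as a vector space $M_{\mathfrak{H}}(\dot{p},\dot{z})\cong\mathbb{C}[q]w$. The central element $z$ acts by the scalar $\dot{z}$ and $q$ acts by left multiplication, while $[p,q]=z$ gives $[p,q^n]=nq^{n-1}z$ and hence $pq^nw=\dot{p}q^nw+n\dot{z}q^{n-1}w$. Identifying $q^nw$ with $q^n\in\mathbb{C}[q]$, this says that $p$ acts as $\dot{p}\cdot\mathrm{id}+\dot{z}\,\ptl_q$, i.e. $(p-\dot{p})$ acts as the lowering operator $\dot{z}\,\ptl_q$. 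This single computation is the engine for everything that follows, and the hypothesis $\dot{z}\neq0$ is exactly what keeps $\ptl_q$ nondegenerate.

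For the irreducibility in (i) I would take a nonzero submodule $N$ and a nonzero $v=\sum_{i=0}^n a_iq^iw\in N$ with $a_n\neq0$, and apply $(p-\dot{p})^n$; by the computation above this yields $a_n\,n!\,\dot{z}^n\,w$, a nonzero multiple of $w$ precisely because $\dot{z}\neq0$. Hence $w\in N$ and $N=M_{\mathfrak{H}}(\dot{p},\dot{z})$, so the module is simple. For uniqueness, let $V$ be any irreducible Whittaker $\mathfrak{H}$-module with cyclic Whittaker vector $w_V$ satisfying $pw_V=\dot{p}w_V$ and $zw_V=\dot{z}w_V$. The universal property of the induced module supplies a surjection $M_{\mathfrak{H}}(\dot{p},\dot{z})\to V$ sending $w\mapsto w_V$; since the source is already known to be simple and $V\neq0$, this surjection is an isomorphism.

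For (ii) I would first determine all Whittaker vectors of $M_{\mathfrak{H}}(\dot{p},\dot{z})$. Writing $v=\sum a_iq^iw$ and imposing $pv=\lambda v$, comparison of leading terms forces $\lambda=\dot{p}$, and the remaining condition $\dot{z}\,\ptl_q v=0$ then forces $a_i=0$ for $i\geq1$ because $\dot{z}\neq0$. Thus the space of Whittaker vectors is exactly $\mathbb{C}w$, on which $p$ acts by $\dot{p}$ and $z$ by $\dot{z}$. Both scalars are therefore isomorphism invariants: $\dot{z}$ is the scalar by which the central $z$ acts on the whole module, and $\dot{p}$ is the eigenvalue of $p$ on the one-dimensional space of Whittaker vectors, which any isomorphism must preserve. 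Conversely equality of $(\dot{p},\dot{z})$ produces identical data, so (ii) follows.

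I expect no serious obstacle, since the proposition is elementary once the $p$-action has been reduced to $\dot{p}\cdot\mathrm{id}+\dot{z}\,\ptl_q$; the only point demanding care is the logical order in the uniqueness argument, where the simplicity of the induced module must be established first so that the universal property can upgrade the surjection onto an arbitrary irreducible $V$ into an isomorphism.
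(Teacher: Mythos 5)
Your proof is correct, but it does not follow the paper's proof for the simple reason that the paper gives none: it disposes of this proposition with the remark that it ``is from [15], it can also be checked easily'' (the pertinent reference is presumably [16], Christodoulopoulou's paper on Whittaker modules for Heisenberg algebras). You have supplied the easy check in full, and along lines the paper itself uses elsewhere (compare the leading-term arguments in Proposition 3.1 and Lemma 3.4): realize the induced module as $\mathbb{C}[q]w$ by PBW, note that $q$ acts by multiplication, $z$ by $\dot z$, and $p-\dot p$ by $\dot z\,\partial_q$, then kill leading terms with powers of $(p-\dot p)$ to get simplicity, and use the one-dimensionality of the space of Whittaker vectors (which is $\mathbb{C}w$, forced by $\dot z\neq 0$) to see that $(\dot p,\dot z)$ is an isomorphism invariant; your universal-property step upgrading the surjection onto an arbitrary irreducible $V$ is placed in the right logical order, as you yourself flag. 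One point deserves to be made explicit: part (i) as literally stated (``the unique irreducible Whittaker $\mathfrak{H}$-module on which $z$ acts by $\dot z$'') is in tension with part (ii), because for fixed $\dot z\neq 0$ your own part-(ii) argument shows that $M_{\mathfrak{H}}(\dot p,\dot z)$ and $M_{\mathfrak{H}}(\dot p',\dot z)$ with $\dot p\neq\dot p'$ are non-isomorphic irreducible Whittaker modules of the same level (they are related by the automorphism $p\mapsto p+\lambda z$ of $\mathfrak{H}$, but by no module isomorphism). Your reading of (i) --- uniqueness among irreducible Whittaker modules of the given type $(\dot p,\dot z)$ --- is the only one consistent with (ii), and it is exactly what your argument establishes; so your write-up not only fills the gap the paper leaves to a citation but also silently repairs an imprecision in the statement itself.
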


\begin{defi}
Suppose $\mathbb{C}_{\dot{e}}=\mathbb{C}w$ be a $\mathbb{C}e-$module
by the action of $ew=\dot{e}w.$ The induced module
$$W_{sl_2}(\dot{e})=U(\mathfrak{H})\otimes_{U(\mathbb{C}e)}\mathbb{C}_{\dot{e}}$$
is called a Whittaker module for complex semisimple Lie algebra
$sl_2.$
\end{defi}

Let $v\in W_{sl_2}(\dot{e})$ be any nonzero vector, then $v$ is of
the form $v=\sum_{i,j}a_{i,j}f^ih^jw.$ Define
$$\mathrm{deg}_f(v)=\mathrm{max}\{i|a_{i,j}\neq0\}$$ and
$$W_{sl_2}^{k}(\dot{e})=\mathrm{span}\{v|v\in W_{sl_2}(\dot{e}),\mathrm{deg}_f(v)\leq k.\}$$

\begin{lemm}
Suppose $\dot{e}\neq0.$

$\mathrm{(i)}$ If $\bar{\Omega}\in U(sl_2)$ is of the form
 $\bar{\Omega}=f+\bar{\Omega}_1,$ where $\bar{\Omega}_1\in \mathbb{C}[h],$
such that $\bar{\Omega}w$ is a Whittaker vector, then
$\bar{\Omega}=f+\frac{h}{2\dot{e}}+\frac{h^2}{4\dot{e}}+c$ for some
$c\in\mathbb{C}.$

$\mathrm{(ii)}$ For any $a\in\mathbb{C},$ $i\in\mathbb{N},$ there
exists $\psi_0(x),\psi_1(x),\cdots,\psi_i(x)\in\mathbb{C}[x]$ such
that
$$f^iw=\frac{1}{(4\dot{e})^i}(\Omega-a)^iw+\psi_1(h)(\Omega-a)^{i-1}w+\cdots+\psi_{i-1}(h)(\Omega-a)w+\psi_i(h)w,$$
where $\Omega=4fe+2h+h^2$ is the Casimir element of $sl_2.$
\end{lemm}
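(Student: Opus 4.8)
The plan is to prove both parts by direct computation in $U(sl_2)$, exploiting the fact that $w$ is a Whittaker vector for $sl_2$ (so $ew=\dot e w$ with $\dot e\neq0$) together with the commutation relations $[h,e]=2e$, $[h,f]=-2f$, $[e,f]=h$. The guiding idea parallels the zero-level case: I want an element that, when applied to $w$, produces a Whittaker vector with $\deg_f=1$, and I expect it to be essentially a shifted form of the Casimir $\Omega=4fe+2h+h^2$.

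For part (i), I would start from $\bar\Omega=f+\bar\Omega_1$ with $\bar\Omega_1\in\mathbb{C}[h]$, write $\bar\Omega_1=\sum_{i=0}^s b_i h^i$, and impose that $\bar\Omega w$ is a Whittaker vector, i.e. $(e-\dot e)\bar\Omega w=0$. First I compute $e\cdot f w$: using $[e,f]=h$ one gets $efw=few+hw=\dot e fw+hw$, so $(e-\dot e)fw=hw$. Next I compute $(e-\dot e)h^i w$ using identity (2.6), namely $[e,h^i]=\sum_{k\geq1}(-2)^k\binom{i}{k}h^{i-k}e$, so that $e h^i w=h^i\dot e w+\sum_{k\geq1}(-2)^k\binom{i}{k}\dot e\,h^{i-k}w$. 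Collecting terms and setting the total coefficient of each power $h^j w$ to zero gives a triangular linear system in the $b_i$. I expect the top-degree constraint to force $s=2$, and then the two remaining equations to pin down $b_2=\tfrac{1}{4\dot e}$ and $b_1=\tfrac{1}{2\dot e}$, with $b_0=c$ free. This yields $\bar\Omega=f+\tfrac{h}{2\dot e}+\tfrac{h^2}{4\dot e}+c$, matching the claimed form. One checks for consistency that $4\dot e\,\bar\Omega=\Omega+4\dot e c$ on $w$, confirming $\Omega$ is the natural object.

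For part (ii), I would argue by induction on $i$, exactly as in the proof of Lemma 3.6. The base case $i=0$ is trivial ($w=w$). From part (i), modulo $W_{sl_2}^{k}(\dot e)$ we have $f^{k+1}w\equiv f^{k}\bigl(f+\tfrac{h}{2\dot e}+\tfrac{h^2}{4\dot e}\bigr)w=\tfrac{1}{4\dot e}f^{k}\Omega w\pmod{W_{sl_2}^{k}(\dot e)}$ after using $4\dot e\,\bar\Omega w=\Omega w$ (up to the constant). Since $\Omega$ is central in $U(sl_2)$, I can slide it leftward through $f^{k}$ freely and iterate, obtaining $f^{k+1}w\equiv\tfrac{1}{(4\dot e)^{k+1}}\Omega^{k+1}w\pmod{W_{sl_2}^{k}(\dot e)}$. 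Replacing $\Omega$ by $\Omega-a$ and absorbing the lower-order correction terms into coefficients $\psi_j(h)\in\mathbb{C}[x]$ (these arise from the commutators pushing $h$-powers down in $\deg_f$) gives the stated expansion by the inductive hypothesis. The centrality of $\Omega$ is what makes the telescoping clean, so the main bookkeeping obstacle is simply tracking that every correction term genuinely lands in lower $\deg_f$ and has polynomial-in-$h$ coefficients; this follows because $[\Omega,f]=0$ and each application of (2.6) lowers the $e$-count, leaving only $h$-dependence after $ew=\dot e w$ is used.

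I expect part (i), the explicit solution of the triangular system, to be the only step requiring genuine calculation; part (ii) is then a routine induction modeled verbatim on Lemma 3.6, with the Casimir $\Omega$ playing the role that the quasi-central element $\mathfrak c$ played there.
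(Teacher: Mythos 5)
Your proposal is correct and follows essentially the same route as the paper: part (i) by imposing $(e-\dot{e})\bar{\Omega}w=0$ and solving the resulting triangular system via identity (2.6) (the paper compresses this into ``we can easily get''), and part (ii) by the telescoping induction modeled on Lemma 3.6, using $4\dot{e}\,\bar{\Omega}w=\Omega w$ (up to a constant) and the centrality of $\Omega$ to obtain $f^{i+1}w\equiv\frac{1}{(4\dot{e})^{i+1}}\Omega^{i+1}w \pmod{W_{sl_2}^{i}(\dot{e})}$. Your filled-in details (the explicit coefficients $b_2=\frac{1}{4\dot{e}}$, $b_1=\frac{1}{2\dot{e}}$, and the bookkeeping that correction terms stay in lower $f$-degree with $\mathbb{C}[h]$ coefficients) are accurate and match what the paper leaves implicit.
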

\begin{proof}$\mathrm{(i)}$ Assume $\bar{\Omega}=f+\sum_{i=0}^na_ih^i, a_n\neq0$
and $\bar{\Omega}w$ is a Whittaker vector. By
$(e-\dot{e})\bar{\Omega}w=0$ and (2.6), we can easily get that
$\bar{\Omega}=f+\frac{h}{2\dot{e}}+\frac{h^2}{4\dot{e}}+c$ for some
$c\in\mathbb{C}.$

$\mathrm{(ii)}$ By $\mathrm{(i)}$ and the fact that $\Omega\in
Z(sl_2),$ the center of $U(sl_2),$ we have
\begin{align*} f^{i+1}w&=
f^{{i}}(f+\frac{h}{2\dot{e}}+\frac{h^2}{4\dot{e}})w(\mathrm{mod}W_{sl_2}^{i}(\dot{e}))
\\
&\equiv f^{{i}}(4fe+2h+h^2)\frac{1}{4\dot{e}}w
(\mathrm{mod}W_{sl_2}^{i}(\dot{e}))
\\
&\equiv f^{{i-1}}(4fe+2h+h^2)f\frac{1}{4\dot{e}}w
(\mathrm{mod}W_{sl_2}^{i}(\dot{e}))\\
&\equiv f^{{i-2}}(4fe+2h+h^2)^2\frac{1}{(4\dot{e})^2}w
(\mathrm{mod}W_{sl_2}^{i}(\dot{e}))\\
&\equiv \cdots\\
&\equiv (4fe+2h+h^2)^{i+1}\frac{1}{(4\dot{e})^{i+1}}w
(\mathrm{mod}W_{sl_2}^{i}(\dot{e})).
\end{align*}
By induction, $\mathrm{(ii)}$ holds.
\end{proof}

\begin{lemm}
For Whittaker $sl_2-$module $W_{sl_2}(\dot{e}),$ the following hold:

$\mathrm{(i)}$ If $\dot{e}=0,$  the set of Whittaker vectors of
$W_{sl_2}(0)$ is $\mathbb{C}[h]w.$

$\mathrm{(ii)}$ If $\dot{e}\neq0,$  the set of Whittaker vectors of
$W_{sl_2}(\dot{e})$ is $\mathbb{C}[\Omega]w.$
\end{lemm}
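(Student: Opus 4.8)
The plan is to prove each of the two inclusions in each case separately, using the $f$-degree filtration $W_{sl_2}^{k}(\dot{e})$ together with the commutator identities (2.5) and (2.6). I first dispose of the easy inclusions. Since $ew=\dot{e}w$ and (2.6) gives $[e,h^{m}]=\sum_{i\geq1}(-2)^{i}\binom{m}{i}h^{m-i}e$, one has $e\,h^{m}w=h^{m}(ew)+[e,h^{m}]w$. In case (i), where $\dot{e}=0$, every summand is annihilated, so $e\,\psi(h)w=0$ for all $\psi\in\mathbb{C}[h]$, showing $\mathbb{C}[h]w$ consists of Whittaker vectors. In case (ii) I use that $\Omega=4fe+2h+h^{2}$ is central in $U(sl_2)$, so $e\,\Omega^{k}w=\Omega^{k}(ew)=\dot{e}\,\Omega^{k}w$, whence $\mathbb{C}[\Omega]w$ consists of Whittaker vectors.

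For the reverse inclusions I would write an arbitrary $v\in W_{sl_2}(\dot{e})$ in normal form $v=\sum_{i=0}^{n}f^{i}\psi_{i}(h)w$ with $\psi_{n}\neq0$ and $n=\mathrm{deg}_f(v)$, and compute $(e-\dot{e})v$ term by term. Writing $[e,\psi_{i}(h)]=\theta_{i}(h)e$ (a consequence of (2.6), with $\theta_{i}=0$ exactly when $\psi_{i}$ is constant) and using $[e,f^{i}]=if^{i-1}h-i(i-1)f^{i-1}$ from (2.5), I obtain
$$(e-\dot{e})v=\sum_{i=0}^{n}\big(\dot{e}f^{i}\theta_{i}(h)+i\,f^{i-1}(h-(i-1))\psi_{i}(h)\big)w.$$
Reading off the top $f$-coefficient settles two things at once. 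In case (ii) the coefficient of $f^{n}$ is $\dot{e}\,\theta_{n}(h)$, which must vanish; since $\dot{e}\neq0$ this forces $\theta_{n}=0$, i.e. $\psi_{n}$ is a nonzero scalar. In case (i), where $\dot{e}=0$, the highest surviving term is $n f^{n-1}(h-(n-1))\psi_{n}(h)w$, which cannot vanish when $n\geq1$ because $\psi_{n}\neq0$; hence $n=0$ and $v=\psi_{0}(h)w\in\mathbb{C}[h]w$, completing case (i).

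For case (ii) I would finish by induction on $n=\mathrm{deg}_f(v)$. The base case $n=0$ is already covered, since then $\psi_{0}$ is a scalar and $v\in\mathbb{C}w\subseteq\mathbb{C}[\Omega]w$. For the inductive step, the leading coefficient $\psi_{n}=a_{n}$ is a nonzero scalar, and Lemma 4.5(ii) shows $\Omega^{n}w=(4\dot{e})^{n}f^{n}w$ plus terms of $f$-degree $<n$; hence $v':=v-\frac{a_{n}}{(4\dot{e})^{n}}\Omega^{n}w$ is again a Whittaker vector (a difference of Whittaker vectors) whose $f^{n}$-terms cancel, so $\mathrm{deg}_f(v')<n$. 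By the induction hypothesis $v'\in\mathbb{C}[\Omega]w$, whence $v\in\mathbb{C}[\Omega]w$. The main obstacle is the bookkeeping in the displayed computation of $(e-\dot{e})v$: one must check that $[e,\psi_{i}(h)]$ lowers the $h$-degree and contributes only at $f$-degree $i$, so that the coefficient of $f^{n}$ really is $\dot{e}\,\theta_{n}(h)$ with no interference from the other summands. Once this is in place, the reduction via Lemma 4.5(ii) is routine.
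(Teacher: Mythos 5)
Your proposal is correct and takes essentially the same approach as the paper: the easy inclusions come from identity (2.6) and the centrality of $\Omega$, and the reverse inclusions come from a leading-coefficient analysis in the $f$-degree filtration together with the preceding lemma expressing $f^iw$ through powers of $\Omega$ (which you cite as ``Lemma 4.5(ii)''; in the paper's numbering this is Lemma 4.4(ii)). The only difference is organizational: where the paper rewrites $v$ in one step in the form $\sum_i b_i\varphi_i(h)(\Omega-a)^iw$ and tersely asserts that (2.6) forces $\mathrm{deg}(\varphi_i(x))=0$, you make this explicit by an induction on $\mathrm{deg}_f(v)$, first extracting that the top coefficient $\psi_n$ is a nonzero scalar and then subtracting $\frac{a_n}{(4\dot{e})^n}\Omega^nw$ to lower the degree.
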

\begin{proof}
(i) Suppose $\dot{e}=0.$ By (2.6), any nonzero element of
$\mathbb{C}[h]w$ is a Whittaker vector. By (2.5) and (2.6),  we see
that $ev\neq0$ if $\mathrm{deg}_f(v)\neq0,$ which means $v$ is not a
Whittaker vector.

 (ii) Suppose $\dot{e}\neq0.$ On one hand, every nonzero vector of
$\mathbb{C}[\Omega]w$ is a Whittaker vector since $\Omega\in
Z(sl_2)$.

On the other hand, for any Whittaker vector
$v=\sum_{i=0}^na_i\psi_{i}(h)f^iw\in W_{sl_2}(\dot{e})$ with
$\mathrm{deg}_f(v)=n,$ by Lemma 4.4 (ii), we see that $v$ is of the
form
$$v=\sum_{i=0}^nb_i\varphi_i(h)(\Omega-a)^iw$$
for $\varphi_{x}\in\mathbb{C}[x],
\varphi_n(x)\neq0,b_i\in\mathbb{C},b_n\neq0.$ By (2.6), it is easy
to deduce that either $\varphi_i(x)=0$ or
$\mathrm{deg}(\varphi_i(x))=0.$
\end{proof}

\begin{prop}
For Whittaker $sl_2-$module $W_{sl_2}(\dot{e}),$ the following hold:

$\mathrm{(i)}$ If $\dot{e}=0,$ $V$ is a $sl_2-$submodule of
Whittaker module $W_{sl_2}(0),$  then $V$ is a maximal submodule if
and only if there exists a scalar $\alpha$ such that $V$ is
generated by $(h-\alpha)w.$

$\mathrm{(ii)}$ If $\dot{e}\neq0,$  for any
 $\dot{\Omega}\in\mathbb{C},$ the Whittaker module $W_{sl_2}(\dot{e})$ has the following filtration:
 $$W_{sl_2}(\dot{e})=W^0\supset W^1\supset W^2\supset\cdots \supset W^n\supset
 \cdots,$$ where $W^i$ is a Whittaker submodule defined by $W^i=U(sl_2)(\Omega-\dot{\Omega})^iw.$ More precisely,
  $M_{sl_2}(\dot{e},\dot{\Omega}):=W_{sl_2}(\dot{e})/W^1$
  is simple and $W^i/W^{i+1}\cong W_{sl_2}(\dot{e})/W^1$ for all
 $i\in\mathbb{N}.$
\end{prop}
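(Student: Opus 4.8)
The plan is to mirror the structure already used in Section 3 for $\widetilde{W}_\phi$ (Theorem 3.8 (ii) and Proposition 3.3), since the Casimir element $\Omega$ plays exactly the role of the quasi-central element $\mathfrak{c}$, and Lemma 4.4 is the $sl_2$-analogue of Lemma 3.6. Thus part (ii) should go through almost verbatim. For part (i), when $\dot{e}=0$, the relevant fact is Lemma 4.5 (i), which says the Whittaker vectors form $\mathbb{C}[h]w$, so maximal submodules will be cut out by irreducible, hence linear, monic polynomials in $h$; this is the degenerate analogue of Lemma 3.2.

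First I would prove (i). Given a maximal $sl_2$-submodule $V$ of $W_{sl_2}(0)$, I pick a nonzero $v\in V$ of minimal $\deg_f$. Using (2.5) with $\dot{e}=0$ (so $ef^n w = n f^{n-1}(h-n+1)w$) I show that acting by $e$ strictly lowers $\deg_f$ while producing a nonzero term unless $\deg_f(v)=0$; this forces $V$ to contain a nonzero element of $\mathbb{C}[h]w$, i.e. a nonzero $\psi(h)w$. Then, exactly as in the proof of Lemma 3.2, I let $\underline{k}=\min\{\deg\psi\mid\psi(h)w\in V\}$, choose a monic $\underline{\psi}$ of that degree, and use the division algorithm in $\mathbb{C}[h]$ together with (2.5)--(2.6) to show every element of $V$ reduces modulo $\underline{\psi}(h)w$, so $V=U(sl_2)\underline{\psi}(h)w$. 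Maximality forces $\underline{\psi}$ irreducible and monic over $\mathbb{C}$, hence $\underline{\psi}(h)=h-\alpha$ for some $\alpha\in\mathbb{C}$. The converse (that $(h-\alpha)w$ generates a maximal, proper submodule) follows by the same computation showing $w\notin U(sl_2)(h-\alpha)w$ and that adjoining $w$ recovers all of $W_{sl_2}(0)$.

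For (ii), fix $\dot{\Omega}\in\mathbb{C}$ and set $W^i=U(sl_2)(\Omega-\dot{\Omega})^iw$. Since $\Omega\in Z(sl_2)$, each $W^i$ is a Whittaker submodule. By Lemma 4.4 (ii) every $v\notin W^1$ can be written as $\sum_{k=0}^n\psi_k(h)(\Omega-\dot{\Omega})^k w$ with $\psi_0\ne 0$, whence $\psi_0(h)w\in U(sl_2)v+W^1$; then by Lemma 4.5 (i) applied inside $W_{sl_2}(\dot{e})/W^1$ (or directly, since a nonzero $\psi_0(h)w$ generates $w$ modulo $W^1$ via (2.5)--(2.6)) I get $w\in U(sl_2)v+W^1$, so $W^1$ is maximal and the quotient $M_{sl_2}(\dot{e},\dot{\Omega})$ is simple. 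Finally I define the shift map $g\colon W^i\to W_{sl_2}(\dot{e})/W^1$ by $\psi(h)(\Omega-\dot{\Omega})^{k+i}w\mapsto\psi(h)(\Omega-\dot{\Omega})^k\bar{w}$, check it is a well-defined $sl_2$-homomorphism because $\Omega$ is central, and identify $\ker g=W^{i+1}$, giving $W^i/W^{i+1}\cong W_{sl_2}(\dot{e})/W^1$.

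The main obstacle I anticipate is not in part (ii), which is a faithful transcription of Theorem 3.8 (ii), but in the reduction step of part (i): extracting a nonzero $\psi(h)w$ from an arbitrary minimal-$\deg_f$ element of $V$ when $\dot{e}=0$. The action of $e$ on $f^i h^j w$ via (2.5) carries the extra factor $(h-i+1)$ (the $-n(n-1)f^{n-1}$ term in (2.5)), so one must check carefully that the top $f$-degree term does not cancel and that $\deg_f$ genuinely drops, rather than the coefficient polynomial in $h$ vanishing. Handling this cleanly requires tracking the leading coefficient in both the $f$- and $h$-gradings simultaneously, which is the only place where the $\dot{e}=0$ case differs qualitatively from the nondegenerate filtration argument.
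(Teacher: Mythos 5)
Your treatment of part (ii) is correct and is precisely what the paper intends: the paper's proof of this proposition is just the remark that one can repeat the arguments of Lemma 3.2 and Theorem 3.8(ii), and your transcription of Theorem 3.8(ii) goes through because $\dot{e}\neq0$ and $\Omega$ is central --- Lemma 4.4(ii) replaces Lemma 3.6, a vector $v\notin W^1$ reduces modulo $W^1$ to some nonzero $\psi_0(h)w$, repeated application of $(e-\dot{e})$ together with (2.6) drops the $h$-degree by one with nonzero leading coefficient (this is exactly where $\dot{e}\neq0$ enters), so $w\in U(sl_2)v+W^1$, and the shift map $g$ is well defined by uniqueness of the expansion in $\{h^j(\Omega-\dot{\Omega})^kw\}$. (Your side reference to Lemma 4.5(i) there is misplaced, since that lemma concerns $\dot{e}=0$, but the direct argument you give in parentheses is the right one.)

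Part (i) is where the proposal breaks down, and the failure is not repairable bookkeeping: the statement of (i) is itself false, so no faithful transcription of Lemma 3.2 can prove it. The engine of Lemma 3.2 is the hypothesis $\dot{e}\neq0$: there, any $v'\notin M$ is reduced modulo $M$ to a polynomial in $h$ applied to $w$, and then $(e-\dot{e})$ is applied repeatedly, each application producing a nonzero multiple of $\dot{e}$ by (2.6), until one reaches $w$. When $\dot{e}=0$ this engine is gone, because $e$ annihilates all of $\mathbb{C}[h]w$. Concretely, $W_{sl_2}(0)/U(sl_2)(h-\alpha)w$ is the Verma module of highest weight $\alpha$: modulo this submodule the only way to lower $f$-degree is $ef^{n}\bar{w}=n(\alpha-n+1)f^{n-1}\bar{w}$ (identity (2.5)), and the coefficient vanishes when $\alpha=n-1$. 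Hence for $\alpha\in\mathbb{Z}_{\geq0}$ the vector $f^{\alpha+1}\bar{w}$ is a singular vector, the quotient is reducible, and $U(sl_2)(h-\alpha)w$ is \emph{not} maximal; your converse step (``maximality follows because $w\notin V$ and adjoining $w$ recovers everything'') is a non sequitur, since maximality requires that adjoining an \emph{arbitrary} $v'\notin V$ recover everything. The forward direction fails as well: $V'=U(sl_2)fw=\mathrm{span}\{f^ih^jw:\,i+j\geq1\}$ is maximal (its quotient is the one-dimensional trivial module), yet it is not generated by any $(h-\alpha)w$; indeed $V'\cap\mathbb{C}[h]w=h\,\mathbb{C}[h]w$ while $U(sl_2)hw=\mathrm{span}\{f^ih^jw:\,j\geq1\}\subsetneq V'$, which is simultaneously a counterexample to your claimed reduction $V=U(sl_2)\underline{\psi}(h)w$. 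The obstacle you flagged is real but located in the wrong step: inside $W_{sl_2}(0)$ the leading coefficient $n\varphi_n(h)(h-n+1)$ is a nonzero polynomial, so extracting some nonzero $\psi(h)w\in V$ is fine; the vanishing only bites after $h$ has been specialized to the scalar $\alpha$ in the quotient. A correct version of (i) would assert that $U(sl_2)(h-\alpha)w$ is maximal if and only if $\alpha\notin\mathbb{Z}_{\geq0}$; note that the paper's own appeal to Lemma 3.2, as well as Remark 4.7, conceals this same error.
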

\begin{proof} We can use the same arguments as in Lemma 3.2 and Theorem 3.8 (ii) to
complete the proof. We omit the details.
\end{proof}

\begin{rema}
It is follows from Proposition 4.6 (i) that if $\dot{e}=0,$ $V$ is a
maximal submodule of $W_{sl_2}(0)$ generated by $(h-\alpha)w,$ then
$W_{sl_2}(0)/V$ is a simple highest weight $sl_2$-module with
highest weight $\alpha$.
\end{rema}

\begin{theo}
Let $V$ be a simple Whittaker module of type $\phi:e \rightarrow
\dot{e}$.

$\mathrm{(i)}$ If $\dot{e}=0,$ then there exists
$\alpha\in\mathbb{C}$ such that $V$ is isomorphic to a simple
highest weight $sl_2$-module with highest weight $\alpha$.

$\mathrm{(ii)}$ If $\dot{e}\neq0,$ then $V\cong
M_{sl_2}(\dot{e},\dot{\Omega})$ for some $\dot{\Omega}\in
\mathbb{C}.$
\end{theo}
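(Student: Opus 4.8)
The plan is to classify simple Whittaker modules for $sl_2$ of type $\phi:e\mapsto\dot{e}$ by reducing to the universal Whittaker module $W_{sl_2}(\dot{e})$ and analyzing it according to whether $\dot{e}=0$ or $\dot{e}\neq0$, exactly paralleling the zero-level $\mathfrak{S}$-analysis carried out in Section 3. First I would observe that if $V$ is a simple Whittaker $sl_2$-module with cyclic Whittaker vector $w_v$, then by the universal property there is a surjective homomorphism $g:W_{sl_2}(\dot{e})\rightarrow V$ sending $uw\mapsto uw_v$, so $V$ is a simple quotient of $W_{sl_2}(\dot{e})$. The task then becomes describing the simple quotients of $W_{sl_2}(\dot{e})$ in each case.

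For case $\mathrm{(i)}$, $\dot{e}=0$: by Lemma 4.5(i) the space of Whittaker vectors of $W_{sl_2}(0)$ is $\mathbb{C}[h]w$, and by Proposition 4.6(i) the maximal submodules are exactly those generated by $(h-\alpha)w$ for $\alpha\in\mathbb{C}$. Since $V$ is simple, $\ker(g)$ is a maximal submodule, hence equals the submodule generated by some $(h-\alpha)w$, so $V\cong W_{sl_2}(0)/\langle(h-\alpha)w\rangle$. By Remark 4.7 this quotient is precisely the simple highest weight $sl_2$-module with highest weight $\alpha$, which gives the claim.

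For case $\mathrm{(ii)}$, $\dot{e}\neq0$: by Lemma 4.5(ii) every Whittaker vector of $W_{sl_2}(\dot{e})$ lies in $\mathbb{C}[\Omega]w$, where $\Omega=4fe+2h+h^2$ is the Casimir element. Since $g$ intertwines the $sl_2$-action and $\Omega$ is central, the image $g(\mathbb{C}[\Omega]w)=\mathbb{C}[\Omega]w_v$ consists of Whittaker vectors of $V$; but $V$ is simple, so its Whittaker vectors form a one-dimensional space (this follows because the Whittaker vectors of the simple module $M_{sl_2}(\dot{e},\dot{\Omega})$ are spanned by the image of $w$, by the same reasoning as in Lemma 3.9). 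Hence $\Omega w_v=\dot{\Omega}w_v$ for some $\dot{\Omega}\in\mathbb{C}$, which forces $U(sl_2)(\Omega-\dot{\Omega})w\subseteq\ker(g)$, i.e. $W^1\subseteq\ker(g)$ in the notation of Proposition 4.6(ii). Therefore $V$ is a nonzero quotient of the simple module $M_{sl_2}(\dot{e},\dot{\Omega})=W_{sl_2}(\dot{e})/W^1$, and since that module is simple by Proposition 4.6(ii), we conclude $V\cong M_{sl_2}(\dot{e},\dot{\Omega})$.

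The main obstacle, as in the zero-level argument, is establishing that the Casimir acts by a scalar on the cyclic Whittaker vector; this rests on knowing that the simple target $M_{sl_2}(\dot{e},\dot{\Omega})$ has a one-dimensional space of Whittaker vectors, the analogue of Lemma 3.9. Everything else is the standard reduction through the universal object and its maximal-submodule structure supplied by Proposition 4.6, so the substance of the proof is simply citing Lemma 4.5, Proposition 4.6, and Remark 4.7 and invoking simplicity to pin down the parameters $\alpha$ and $\dot{\Omega}$.
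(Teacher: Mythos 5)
Your proof takes essentially the same route as the paper: the paper's own proof of Theorem 4.8 consists precisely of citing Proposition 4.6(i) and Remark 4.7 for part (i), and of replaying, with the Casimir $\Omega$ in place of the quasi-central element $\mathfrak{c}$, the argument of Theorem 3.10(ii) for part (ii); your write-up is a faithful expansion of exactly that plan. One step needs repair, however: your justification of the key claim that the Whittaker vectors of the simple module $V$ span a one-dimensional space is circular. You argue that this ``follows because the Whittaker vectors of the simple module $M_{sl_2}(\dot{e},\dot{\Omega})$ are spanned by the image of $w$,'' but at that stage of the argument $V$ has not yet been identified with $M_{sl_2}(\dot{e},\dot{\Omega})$ --- indeed the scalar $\dot{\Omega}$ has not even been defined --- so no Lemma 3.9--type information about $M_{sl_2}(\dot{e},\dot{\Omega})$ can be transferred to $V$. (The paper commits the same hand-wave in Theorem 3.10(ii), where it asserts without proof that simplicity forces all Whittaker vectors of $V$ into $\mathbb{C}w_v$.) The gap is easy to close, and closing it makes the one-dimensionality claim unnecessary: $\Omega$ is central in $U(sl_2)$ and $V$ is a simple module of countable dimension over $\mathbb{C}$, so by Schur's lemma (in Dixmier's form for countable-dimensional modules) $\Omega$ acts on all of $V$ by a scalar $\dot{\Omega}$; in particular $\Omega w_v=\dot{\Omega}w_v$, whence $U(sl_2)(\Omega-\dot{\Omega})w\subseteq\ker(g)$, and the remainder of your argument --- $V$ is a nonzero quotient of the module $M_{sl_2}(\dot{e},\dot{\Omega})$, which is simple by Proposition 4.6(ii), hence $V\cong M_{sl_2}(\dot{e},\dot{\Omega})$ --- goes through verbatim.
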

\begin{proof}
$\mathrm{(i)}$ follows from Proposition 4.6 (i) and Remark 4.7. The
proof of $\mathrm{(ii)}$ is similar as that of Theorem 3.8 (ii). We
omit the details.
\end{proof}

\begin{coro}
Let $V$ be a Whittaker $sl_2-$module of type $\phi:e \rightarrow
\dot{e}$.

$\mathrm{(i)}$ If $\dot{e}=0,$ $V$ is simple if and only if $V$ is a
highest weight module.

$\mathrm{(ii)}$ If $\dot{e}\neq0,$ then $V$ is simple if and only if
 the Casimir element of $sl_2$ acts as a scalar.
\end{coro}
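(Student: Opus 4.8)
The plan is to prove each equivalence by playing the classification of Theorem 4.8 against the explicit submodule descriptions of Proposition 4.6: for the ``simple $\Rightarrow$ structural'' direction I would quote the classification, while for the reverse direction I would realize $V$ as a quotient of the universal module $W_{sl_2}(\dot e)$ and use the maximality statements in Proposition 4.6 to pin down the kernel.

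For part (i) the forward implication I would read off directly from Theorem 4.8(i): if $V$ is simple with $\dot e=0$, then $V$ is isomorphic to a simple highest weight $sl_2$-module, hence in particular a highest weight module. For the converse I would start from a highest weight module $V$, generated by a highest weight vector $v_0$ with $ev_0=0$ and $hv_0=\alpha v_0$ for some $\alpha\in\mathbb{C}$. Because $\dot e=0$, this $v_0$ is itself a cyclic Whittaker vector of type $\phi$, so the universal property gives a surjection $W_{sl_2}(0)\twoheadrightarrow V$ with $w\mapsto v_0$ and some kernel $N$. The relation $hv_0=\alpha v_0$ forces $(h-\alpha)w\in N$, so $N$ contains the submodule generated by $(h-\alpha)w$; by Proposition 4.6(i) that submodule is maximal, and since $V\neq 0$ I would conclude $N$ equals it, whence $V$ is the simple quotient of Remark 4.7 and is simple.

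For part (ii) the forward direction again uses the classification: if $V$ is simple with $\dot e\neq 0$, Theorem 4.8(ii) gives $V\cong M_{sl_2}(\dot e,\dot\Omega)=W_{sl_2}(\dot e)/W^1$ for some $\dot\Omega$, and since $(\Omega-\dot\Omega)\bar w=0$ in the quotient while $\Omega$ is central and $\bar w$ generates, $\Omega$ acts as the scalar $\dot\Omega$ on all of $V$. For the converse I would assume $\Omega$ acts by a scalar $\dot\Omega$, realize $V$ as a quotient $W_{sl_2}(\dot e)\twoheadrightarrow V$ with $w\mapsto w_v$, and note that the hypothesis gives $(\Omega-\dot\Omega)w_v=0$, so the kernel contains $W^1=U(sl_2)(\Omega-\dot\Omega)w$. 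By Proposition 4.6(ii) the submodule $W^1$ is maximal, so (as $V\neq 0$) the kernel equals $W^1$ and $V\cong M_{sl_2}(\dot e,\dot\Omega)$ is simple.

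The substantive computations are already packaged in Proposition 4.6 (maximality of the two relevant submodules) and in Lemmas 4.4--4.5 (centrality of $\Omega$ and its expansion in powers of $f$), so no fresh calculation is required. The main point needing care, and the chief potential obstacle, is the reverse direction of (i): I must check that a highest weight vector genuinely qualifies as a cyclic Whittaker vector of type $\phi$, which holds precisely because $\dot e=0$, so that the universal property and the maximality of Proposition 4.6(i) apply. The analogous care in (ii) is to ensure that ``$\Omega$ acts as a scalar'' forces the kernel to be exactly $W^1$ rather than merely to contain it; this follows from maximality together with $V\neq 0$.
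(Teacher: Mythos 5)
Your proposal reconstructs exactly the derivation the paper intends: the paper states this corollary with no proof of its own, treating it as immediate from Theorem 4.8 (forward directions) and from Proposition 4.6 together with the universal property of $W_{sl_2}(\dot e)$ (converse directions), and that is precisely your plan. Part (ii) is sound on this route: for $\dot e\neq 0$ the submodule $W^1=U(sl_2)(\Omega-\dot\Omega)w$ really is maximal (this is Kostant's theory for the nondegenerate character of $sl_2$), so your kernel argument closes correctly. The forward half of part (i) is also fine, since Theorem 4.8(i) is a true statement.

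The genuine gap is in the converse of part (i), and it is inherited from the paper rather than invented by you: Proposition 4.6(i), which your argument leans on, is false as stated. The quotient of $W_{sl_2}(0)$ by the submodule $N_\alpha$ generated by $(h-\alpha)w$ is cyclic on a vector $\bar w$ with $e\bar w=0$ and $h\bar w=\alpha\bar w$, i.e.\ it is the Verma module $M(\alpha)$; and $M(\alpha)$ is simple if and only if $\alpha\notin\mathbb{Z}_+$, because for $\alpha\in\mathbb{Z}_+$ one has $ef^{\alpha+1}\bar w=(\alpha+1)(\alpha-(\alpha+1)+1)f^{\alpha}\bar w=0$, so $f^{\alpha+1}\bar w$ generates a proper nonzero submodule and $N_\alpha$ is not maximal. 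Consequently your step ``by Proposition 4.6(i) that submodule is maximal'' fails exactly for dominant integral $\alpha$, and in fact statement (i) of the corollary is itself false as written: $M(0)$ is a highest weight module whose highest weight vector is a cyclic Whittaker vector of type $e\mapsto 0$, yet $M(0)$ is not simple. Your own flag that this direction was ``the main point needing care'' was well placed, but the obstruction is not whether the highest weight vector qualifies as a cyclic Whittaker vector (it does, as you checked); it is that the maximality input is simply not available. The repairable form of (i) is the one Theorem 4.8(i) actually supports: $V$ is simple if and only if $V$ is isomorphic to a \emph{simple} highest weight module (equivalently, a highest weight module whose highest weight $\alpha$ lies outside $\mathbb{Z}_+$, or whose singular vectors vanish in $V$).
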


\begin{theo}
Let $W(\dot{e},\dot{p},\dot{z})$ be a Whittaker
$\mathfrak{S}-$module of type $\phi: e \rightarrow \dot{e}, p
\rightarrow \dot{p}$ of nonzero level $\dot{z}.$ Then
$W(\dot{e},\dot{p},\dot{z})$ is simple if and only if there exists a
simple Whittaker $\mathfrak{H}-$module
$M_{\mathfrak{H}}(\dot{p},\dot{z})$ and a simple Whittaker
$\mathfrak{sl_2}-$module
$M_{\mathfrak{sl_2}}(\dot{e}-\frac{1}{2\dot{z}}\dot{p}^2,\dot{\Omega})$
such that $W(\dot{e},\dot{p},\dot{z})\cong
M_{\mathfrak{H}}^{\mathfrak{S}}(\dot{p},\dot{z})\otimes
M_{sl_2}^{\mathfrak{S}}(\dot{e}-\frac{1}{2\dot{z}}\dot{p}^2,\dot{\Omega}).$
\end{theo}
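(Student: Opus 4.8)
The plan is to decouple the Heisenberg and $sl_2$ parts of $\mathfrak S$ on any module of nonzero level. Since $\dot z\neq0$, inside the localization $U(\mathfrak S)[z^{-1}]$ (equivalently, acting on any module where $z$ is the invertible scalar $\dot z$) I would introduce the modified elements
\[
E=e-\frac{p^2}{2z},\qquad H=h+\frac{pq+qp}{2z},\qquad F=f+\frac{q^2}{2z}.
\]
A direct bracket computation shows that each of $E,H,F$ commutes with $p,q,z$, hence with all of $\mathfrak H$, and that $\{E,H,F\}$ again satisfies $[H,E]=2E$, $[H,F]=-2F$, $[E,F]=H$. Indeed the quadratics $\frac{p^2}{2z},\,-\frac{pq+qp}{2z},\,-\frac{q^2}{2z}$ form an ``oscillator'' copy of $sl_2$ inside $U(\mathfrak H)[z^{-1}]$ whose adjoint action on $\mathfrak H$ reproduces that of $e,h,f$, so subtracting it yields a commuting copy $sl_2^{\mathrm{new}}=\langle E,H,F\rangle$. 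The subalgebras $\mathfrak H$ and $sl_2^{\mathrm{new}}$ commute and together generate the $\mathfrak S$-action, since $e=E+\frac{p^2}{2z}$, $h=H-\frac{pq+qp}{2z}$, $f=F-\frac{q^2}{2z}$. On a Whittaker vector $w$ one computes $Ew=(\dot e-\frac{1}{2\dot z}\dot p^2)w$, which already pins down the $sl_2$-parameter in the statement.

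For the ``if'' direction I would put an $\mathfrak S$-module structure on $T:=M_{\mathfrak H}(\dot p,\dot z)\otimes M_{sl_2}(\dot e-\frac{1}{2\dot z}\dot p^2,\dot\Omega)$ by letting $\mathfrak H$ act on the first factor, letting $sl_2^{\mathrm{new}}$ act on the second, and letting $e,h,f$ act through the decomposition above (so that $e$ acts as $E$ on the second factor plus $\frac{p^2}{2\dot z}$ on the first, and similarly for $h,f$). Because $\mathfrak H$ and $sl_2^{\mathrm{new}}$ commute, this is a well-defined $\mathfrak S$-module, and the pure tensor of the two Whittaker vectors is a cyclic Whittaker vector of type $\phi$. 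Simplicity of $T$ then follows from simplicity of the two factors: a nonzero submodule is stable under the commuting actions of $\mathfrak H$ and $sl_2^{\mathrm{new}}$, and since $M_{\mathfrak H}(\dot p,\dot z)$ is the unique simple $\mathfrak H$-module of level $\dot z$ (Proposition 4.2) with scalar endomorphisms, a standard argument reduces any such submodule to a nonzero $sl_2^{\mathrm{new}}$-submodule of $M_{sl_2}(\cdots,\dot\Omega)$, which is everything by Theorem 4.8.

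For the ``only if'' direction, given a simple $W(\dot e,\dot p,\dot z)$ I would restrict to $\mathfrak H$. Since $z$ acts by $\dot z\neq0$ and $p-\dot p$ acts locally nilpotently, the cited description of level-$\dot z$ modules (L\"u--Mazorchuk--Zhao) gives an isomorphism $W\cong M_{\mathfrak H}(\dot p,\dot z)\otimes P$, where $P:=\mathrm{Hom}_{\mathfrak H}(M_{\mathfrak H}(\dot p,\dot z),W)$ is the multiplicity space carrying the commuting $sl_2^{\mathrm{new}}$-action. The Whittaker conditions transfer: the image of a Whittaker vector is a cyclic Whittaker vector for $sl_2^{\mathrm{new}}$ on $P$ with $E$-value $\dot e-\frac{1}{2\dot z}\dot p^2$, and simplicity of $W$ forces simplicity of $P$. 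By Theorem 4.8, $P\cong M_{sl_2}(\dot e-\frac{1}{2\dot z}\dot p^2,\dot\Omega)$ for some $\dot\Omega$, giving the asserted isomorphism.

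The main obstacle is the rigorous tensor factorization over $\mathfrak H$: the Stone--von Neumann type statement that, at nonzero level, every module is the tensor product of the unique simple Heisenberg module with a commuting multiplicity module. This is exactly where I would lean on the cited classification for conformal Galilei algebras; checking that the commuting $sl_2^{\mathrm{new}}$-action and the Whittaker conditions descend correctly to $P$, and that simplicity passes through the decomposition, is the delicate part, whereas the bracket identities for $E,H,F$ and the identification of the parameter $\dot e-\frac{1}{2\dot z}\dot p^2$ are routine.
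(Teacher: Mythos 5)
Your proposal is correct and follows essentially the same route as the paper: your commuting copy $E=e-\frac{p^2}{2z}$, $H=h+\frac{pq+qp}{2z}$, $F=f+\frac{q^2}{2z}$ is exactly $e-\Phi(e)$, $h-\Phi(h)$, $f-\Phi(f)$ for the homomorphism $\Phi:U(\mathfrak{S})\rightarrow U(\mathfrak{H})_{(z)}$ of L\"u--Mazorchuk--Zhao that the paper invokes (Theorem 1 of [26]), and your Stone--von Neumann tensor-factorization step at nonzero level is the same appeal to Theorem 3 of [26] that constitutes the paper's proof of both directions. The only difference is presentational: you verify the brackets and sketch the Schur/density argument for simplicity of the tensor product, where the paper simply cites [26].
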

\begin{proof} Suppose first that $W(\dot{e},\dot{p},\dot{z})$ is
simple Whittaker module of nonzero level.
 Denote by
$U(\mathfrak{H})_{(z)}$ the localization of $U(\mathfrak{H})$ with
respect to the multiplicative subset $\{z^{i}|i\in\mathbb{Z}_+\}$.
By Theorem 1 (i) in [26] (here $z$ is opposite to that of in [26]),
 we see that there is a unique algebra
homomorphism $\Phi:U(\mathfrak{S})\rightarrow U(\mathfrak{H})_{(z)}$
which is the identity on $U(\mathfrak{H})$ such that
$$\Phi(e)=\frac{1}{2z}p^2,\ \Phi(f)=-\frac{1}{2z}q^2,\
\Phi(h)=-\frac{1}{z}qp-\frac{1}{2}.$$ Then
$\mathrm{Res}_{\mathfrak{H}}^{\mathfrak{S}}W(\dot{e},\dot{p},\dot{z}),$
the restrictive module on $\mathfrak{H}$, contains a simple
Whittaker submodule $M_{\mathfrak{H}}(\dot{p},\dot{z}),$ which can
be viewed as a $\mathfrak{S}-$Whittaker module such that
$$ev=\Phi(e)v,fv=\Phi(f)v,hv=\Phi(h)v.$$
 We denote this new $\mathfrak{S}$-module by $W_{\mathfrak{H}}^{\mathfrak{S}}(\dot{p},\dot{z}).$
Then, by Theorem 3 (ii) in [26], we have a simple $sl_2-$Whittaker
module $M_{sl_2}(\dot{e}-\frac{1}{2\dot{z}}\dot{p}^2,\dot{\Omega})$
such that $W\cong
M_{\mathfrak{H}}^{\mathfrak{S}}(\dot{p},\dot{z})\otimes
M_{sl_2}^{\mathfrak{S}}(\dot{e}-\frac{1}{2\dot{z}}\dot{p}^2,\dot{\Omega}),$
where the simple $\mathfrak{S}-$module
$M_{sl_2}^{\mathfrak{S}}(\dot{e}-\frac{1}{2\dot{z}}\dot{p}^2,\dot{\Omega})$
is from simple $sl_2-$module
$M_{sl_2}(\dot{e}-\frac{1}{2\dot{z}}\dot{p}^2,\dot{\Omega})$ by
setting $\mathfrak{H}v=0.$

Conversely, if $M_{\mathfrak{H}}(\dot{p},\dot{z})$ is a simple
Whittaker $\mathfrak{H}$-module of nonzero level $\dot{z}$ and
$M_{sl_2}(\dot{e}-\frac{1}{2\dot{z}}\dot{p}^2,\dot{\Omega})$ is a
simple Whittaker $sl_2-$module. Then we can view
$M_{\mathfrak{H}}(\dot{p},\dot{z})$ as a $\mathfrak{S}-$ module,
denote by $M_{\mathfrak{H}}^{\mathfrak{S}}(\dot{p},\dot{z}),$ by
defining $ev=\Phi(e)v,fv=\Phi(f)v,hv=\Phi(h)v.$ We also can view
$M_{sl_2}(\dot{e}-\frac{1}{2\dot{z}}\dot{p}^2,\dot{\Omega})$  as a
$\mathfrak{S}-$module by defining
$\mathfrak{H}.M_{sl_2}(\dot{e}-\frac{1}{2\dot{z}}\dot{p}^2,\dot{\Omega})=0.$
then $M_{\mathfrak{H}}^{\mathfrak{S}}(\dot{p},\dot{z})\otimes
M_{sl_2}^{\mathfrak{S}}(\dot{e}-\frac{1}{2\dot{z}}\dot{p}^2)$ is a
Whittaker $\mathfrak{S}-$module of type $\phi: e\rightarrow\dot{e},
p\rightarrow\dot{p}$ of nonzero level $\dot{z}.$  By Theorem 3 (i)
in [26], we see that
$M_{\mathfrak{H}}^{\mathfrak{S}}(\dot{p},\dot{z})\otimes
M_{sl_2}^{\mathfrak{S}}(\dot{e}-\frac{1}{2\dot{z}}\dot{p}^2)$ is
simple. This completes the proof of Theorem 3.9.
\end{proof}

\begin{rema}
By Proposition 4.2, Theorem 4.8 and Theorem 4.10, the Whittaker
modules of nonzero level for Schr\"{o}dinger algebra are classified.
\end{rema}

\vskip 5mm

\end{document}